\tikzstyle{nodal}=[circle,draw,fill=black,inner sep=0pt, minimum width=4pt]
\tikzstyle{half-fiber}=[rectangle,draw=black,thick,inner sep=0pt, minimum width=5pt, minimum height=5pt]
\tikzset{double distance = 2pt}
\DeclareMathOperator{\rk}{rk}
\DeclareMathOperator{\Aut}{Aut}
\DeclareMathOperator{\id}{id}
\DeclareMathOperator{\disc}{disc}
\DeclareMathOperator{\Pic}{Pic}
\DeclareMathOperator{\Or}{O}
\title{Virtually abelian symmetry groups of hyperbolic lattices}
\author{Simon Brandhorst}
\address{Simon Brandhorst:
Fakult\"at f\"ur Mathematik und Informatik, Universit\"at des Saarlandes, Campus E2.4, 66123 Saarbr\"ucken, Germany}
\email{brandhorst@math.uni-sb.de}
\author{Giacomo Mezzedimi}
\address{Giacomo Mezzedimi:
Mathematisches Institut, Universit\"at Bonn, Endenicher Allee 60, 53115 Bonn, Germany.}
\email{mezzedim@math.uni-bonn.de}
\thanks{S.B. is funded by the Deutsche Forschungsgemeinschaft (DFG, German Research Foundation) – Project-ID 286237555 – TRR 195.
Gefördert durch die Deutsche Forschungsgemeinschaft (DFG) – Projektnummer 286237555 – TRR 195.}
\date{\today}
\newcommand{\QQ}{\mathbb{Q}}
\newcommand{\RR}{\mathbb{R}}
\newcommand{\ZZ}{\mathbb{Z}}
\newcommand{\HH}{\mathbb{H}}
\newcommand{\D}{\mathcal{D}}
\newtheorem{theorem}{Theorem}[section]
\newtheorem{proposition}[theorem]{Proposition}
\newtheorem{lemma}[theorem]{Lemma}
\newtheorem{corollary}[theorem]{Corollary}
\theoremstyle{definition}
\newtheorem{definition}[theorem]{Definition}
\theoremstyle{remark}
\newtheorem{remark}[theorem]{Remark}
\let\@wraptoccontribs\wraptoccontribs
\address{Markus Kirschmer:
Fakult\"at f\"ur Mathematik, Universit\"at Bielefeld, Universit\"atsstra\"sse 25, 33501 Bielefeld, Germany.}
\email{mkirschm@math.uni-bielefeld.de}
\begin{document}

\maketitle
\begin{abstract}
    We give a classification of integral lattices with virtually abelian symmetry group. As a consequence, we complete the classification of K3 surfaces with virtually abelian automorphism group.
    In the appendix we formulate an algorithm for weak approximation in orthogonal groups and use it to determine if two indefinite lattices are isometric. 
\end{abstract}

\section{Introduction}

According to the Tits alternative \cite{tits}, any finitely generated subgroup of isometries of a hyperbolic lattice $L$ is either virtually solvable or it contains a free, non-abelian subgroup, where we say that a property of a group holds \emph{virtually} if it holds for a finite index subgroup. In view of our geometric applications, we focus on a specific subgroup of isometries of $L$, namely its \emph{symmetry group}. Let $\HH_L$ be the hyperbolic space associated to the hyperbolic lattice $L$. The \emph{Weyl group} $W(L)\subseteq \Or^+(L)$ is the subgroup generated by reflections into $(-2)$-roots of $L$, and we denote by $\mathcal{D}_L$ the closure of a fundamental domain for the action of $W(L)$ on the positive cone of $L$. Then the symmetry group of $L$ can be equivalently defined as the quotient $\Aut(\mathcal{D}_L) \coloneqq \Or^+(L)/W(L)$, or as the subgroup of $\Or^+(L)$ preserving $\mathcal{D}_L$.

Basic results in hyperbolic geometry show that the symmetry group $\Aut(\mathcal{D}_L)$ is virtually solvable if and only if it is virtually abelian, and this is the case exactly when $\Aut(\mathcal{D}_L)$ is \emph{elementary}, i.e. it has a finite orbit in $\HH_L$ (cf. \cite[Theorems~5.5.9~and~5.5.10]{ratcliffe}). There are three types of elementary groups of isometries in $\Or^+(L)$ (cf. \cite[§5.5]{ratcliffe}):
\begin{enumerate}
\item Elementary groups \emph{of elliptic type} have a fixed point in the interior of $\HH_L$, and they are finite;
\item Elementary groups \emph{of parabolic type} have precisely one fixed point in the boundary $\partial \HH_L$, and they have zero entropy (cf. \cite[Theorem~3.7]{brandhorst.mezzedimi});
\item Elementary groups \emph{of hyperbolic type} have two fixed points in $\partial \HH_L$, and they are virtually cyclic.
\end{enumerate}

The main goal of this paper is to complete the classification of virtually solvable (or abelian) symmetry groups of hyperbolic lattices. In rank $\le 2$, it is well known that the symmetry group $\Aut(\mathcal{D}_L)$ is either finite or elementary of hyperbolic type (see e.g. \cite{galluzzi.lombardo.peters}). 
Hyperbolic lattices of rank $\ge 3$ with finite symmetry group were classified by Nikulin \cite{nikulin.finite.aut.greater.5, nikulin.finite.aut.3} and Vinberg \cite{vinberg.finite.aut.4}, while lattices with elementary symmetry groups of parabolic type were recently classified in \cite{brandhorst.mezzedimi} (and independently by X. Yu in \cite{yu.entropy} in the case of Picard lattices of K3 surfaces).
By Nikulin \cite[Theorem 1]{nikulin.interesting}, the number of isomorphism classes of lattices of a fixed rank $n\geq 3$ with elementary symmetry group of hyperbolic type is finite. 
Our main result is the following:

\begin{theorem} \label{thm:main}
A hyperbolic lattice of rank $\ge 3$ has a virtually solvable symmetry group if and only if it belongs to an explicit list of $360$ lattices. More precisely, $\Aut(\mathcal{D}_L)$ is:
\begin{enumerate}
    \item finite if and only if $L$ belongs to Nikulin--Vinberg's list of $118$ lattices. In particular $\rk(L)\le 19$.
    \item elementary of parabolic type if and only if $L$ belongs to the list in \cite{brandhorst.mezzedimi} of $194$ Borcherds lattices. In particular $\rk(L)\le 26$.
    \item elementary of hyperbolic type if and only if $L$ belongs to the list of $48$ lattices in \Cref{thm:classification.3} and \Cref{thm:classification.4}. In particular $\rk(L)\le 4$.
\end{enumerate}
\end{theorem}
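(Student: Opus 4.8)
The plan is to split the statement along the trichotomy recalled above and to settle each type by a different method. By \cite[Theorems~5.5.9 and~5.5.10]{ratcliffe} together with the Tits alternative \cite{tits}, the group $\Aut(\mathcal{D}_L)$ is virtually solvable if and only if it is elementary, in which case it is of \emph{exactly one} of the three mutually exclusive types --- elliptic, parabolic, or hyperbolic. Hence the three lists below are automatically disjoint, and the total $360 = 118 + 194 + 48$ together with the three rank bounds will follow once each type is understood. It thus suffices to identify, for each type, all hyperbolic lattices $L$ with $\rk(L)\ge 3$ whose symmetry group is elementary of that type.

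For the elliptic type, $\Aut(\mathcal{D}_L)$ is finite, and I would simply invoke the classification of Nikulin \cite{nikulin.finite.aut.greater.5,nikulin.finite.aut.3} and Vinberg \cite{vinberg.finite.aut.4}, which in rank $\ge 3$ gives exactly the $118$ lattices of their list, all of rank $\le 19$. For the parabolic type I would cite the classification of Borcherds lattices in \cite{brandhorst.mezzedimi}: exactly $194$ lattices, of rank $\le 26$. That neither list contains a lattice of a different type is immediate, since the elliptic type is characterised by $\Aut(\mathcal{D}_L)$ being finite and the parabolic type by $\Aut(\mathcal{D}_L)$ being infinite of zero entropy.

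The genuinely new part is the hyperbolic type, carried out in \Cref{thm:classification.3} (rank~$3$) and \Cref{thm:classification.4} (rank~$4$). The approach I would take is as follows. Fix a hyperbolic element $g\in\Aut(\mathcal{D}_L)$ generating a finite-index cyclic subgroup; then $g$ has a unique real eigenvalue $\lambda>1$ (its spectral radius), with $\lambda^{-1}$ also an eigenvalue and all others on the unit circle. By Kronecker's theorem the characteristic polynomial of $g$ on $L$ factors as $S_\lambda(x)\cdot C(x)$, where $S_\lambda$ is the minimal polynomial of $\lambda$ --- a Salem polynomial, or in the degenerate case a reciprocal quadratic $x^2-tx+1$ with $t\ge 3$ --- and $C$ is a product of cyclotomic polynomials; the primitive sublattice $N\subset L$ with $N\otimes\QQ=\Ker C(g)$ is then negative definite of rank $\rk(L)-\deg S_\lambda$ and $g|_N$ has finite order. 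The crucial point is to bound $\rk(L)$: Nikulin's finiteness result \cite[Theorem~1]{nikulin.interesting} is stated only rank by rank, so the new input needed is an effective refinement proving that the list is already empty once $\rk(L)\ge 5$ --- equivalently, that for $\rk(L)\ge 5$ the walls of $\mathcal{D}_L$ always spread so far transverse to the axis of $g$ that $\Or^+(L)/W(L)$ fails to be elementary. Granting $\rk(L)\in\{3,4\}$, the discriminant of $L$ is bounded, so only finitely many genera occur; for each genus one enumerates the isometry classes using Nikulin's theory of discriminant forms together with the weak approximation algorithm of the appendix, runs Vinberg's algorithm to produce the Coxeter diagram of $W(L)$ and hence $\Aut(\mathcal{D}_L)$, and keeps exactly those $L$ for which this group is elementary of hyperbolic type. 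This yields the $48$ lattices of \Cref{thm:classification.3,thm:classification.4}, all of rank $\le 4$, and finishes the proof.

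I expect the main obstacle to be precisely the step upgrading Nikulin's abstract, rank-by-rank finiteness to the uniform bound $\rk(L)\le 4$ (and to a workable discriminant bound in ranks $3$ and $4$): this requires quantitative control of how far $\mathcal{D}_L$ can extend around the axis of a hyperbolic symmetry before $\Or^+(L)$ is forced to contain a non-abelian free subgroup. Once that is available, ranks $3$ and $4$ come down to a finite, if sizeable, enumeration, in which the isometry test of the appendix is the indispensable tool.
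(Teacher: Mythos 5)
Your treatment of the elliptic and parabolic cases matches the paper: both are settled by citing Nikulin--Vinberg and \cite{brandhorst.mezzedimi} respectively, and the disjointness of the three lists is indeed automatic from the trichotomy of elementary groups. The hyperbolic case, however, is where your proposal has genuine gaps. First, the rank bound $\rk(L)\le 4$, which you single out as ``the main obstacle'' requiring new quantitative control of the walls of $\mathcal{D}_L$ around the axis of a hyperbolic symmetry, is not new input at all: it follows directly from Nikulin's theorem on arithmetic factor groups \cite[Theorem~9.1.1]{nikulin.factor.groups} (cf. \cite[Theorem~3.9]{brandhorst.mezzedimi} and \Cref{lem:detect_virtually_abelian}(1)), which shows that in rank $\ge 5$ an elementary symmetry group is necessarily of elliptic or parabolic type. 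Leaving this step as an unresolved obstacle is the most serious gap, since without it the classification cannot begin. Second, in ranks $3$ and $4$ you assert that ``the discriminant of $L$ is bounded, so only finitely many genera occur'' without justification; the actual finiteness mechanism is \cite[Lemma~1]{nikulin.interesting} (recalled as \Cref{lem:detect_virtually_abelian}(3)): a finite-index sublattice of $L$ is spanned by $(-2)$-roots with pairwise products in $[-2,18]$, so one enumerates the finitely many such Gram matrices and then passes to their even overlattices. Your Salem-polynomial analysis of a hyperbolic generator, while correct as far as it goes, plays no role in producing this finite list.

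A further problem is the computational engine. Vinberg's algorithm terminates only when $W(L)$ has finitely many simple roots, i.e.\ when $L$ is reflective; for the candidate lattices here, $\Aut(\mathcal{D}_L)$ is typically infinite and the chamber $\mathcal{D}_L$ has infinitely many walls, so the algorithm you propose would not halt on exactly the lattices of interest. The paper instead uses Borcherds' method in Shimada's formulation \cite{shimada.borcherds}, preceded by three pruning tests (the overlattice test of \Cref{lem:overlattice}, the sublattice test of \Cref{alg:sublattice} built on \Cref{prop:easy_fundamental_chamber}, and the cusp test of \Cref{alg:cusp test}) to reduce the roughly $1.5$ million candidate Gram matrices in rank $4$ to a handful. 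In summary, your overall architecture --- trichotomy plus finite enumeration in low rank --- is the right one, but the two load-bearing ingredients, the rank bound and the bounded root basis, are absent from your argument, and the proposed algorithm would fail to terminate where it is actually needed.
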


Thanks to the Torelli theorem, the automorphism group of a K3 surface $X$ over an algebraically closed field $k$ can be identified, up to a finite group, with the symmetry group of its Picard lattice $\Pic(X)$. Therefore a byproduct of the classification in \Cref{thm:main} is the following:

\begin{corollary}
Let $X$ be a K3 surface over an algebraically closed field $k$. The automorphism group $\Aut(X)$ is virtually solvable if and only if either $\rho(X)=\rk(\Pic(X))\le 2$, or $\Pic(X)$ belongs to an explicit list of $359$ lattices of rank $\ge 3$. If this is the case, the rank of a maximal abelian subgroup of $\Aut(X)$ is at most $8$.
\end{corollary}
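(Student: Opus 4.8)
The plan is to deduce the corollary from \Cref{thm:main} by passing through the Torelli theorem, and then to determine which of the $360$ lattices of rank $\ge 3$ actually occur as Picard lattices of K3 surfaces.

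First I would record the standard dictionary between $\Aut(X)$ and the symmetry group of $\Pic(X)=\NS(X)$. The action $\Aut(X)\to\Or(\Pic(X))$ has finite kernel --- an automorphism acting trivially on $\Pic(X)$ fixes an ample class, hence lies in a finite group --- and its image is a finite-index subgroup of $\Aut(\mathcal{D}_{\Pic(X)})$; in characteristic $0$ this is the global Torelli theorem of Piatetski-Shapiro and Shafarevich, and in positive characteristic it follows from the crystalline Torelli theorem together with the finiteness of the $\Aut(X)$-action on the nef cone (the cone conjecture for K3 surfaces), valid in all characteristics. Since virtual solvability passes to and from finite-index subgroups, quotients by finite normal subgroups, and extensions by finite groups, this yields: $\Aut(X)$ is virtually solvable if and only if $\Aut(\mathcal{D}_{\Pic(X)})$ is. If $\rho(X)=\rk\Pic(X)\le 2$, then $\Pic(X)$ has rank at most $2$, so its symmetry group is finite or elementary of hyperbolic type, in particular virtually cyclic, hence virtually solvable, and $\Aut(X)$ is automatically virtually solvable. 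If $\rho(X)\ge 3$, then by \Cref{thm:main} the group $\Aut(\mathcal{D}_{\Pic(X)})$ is virtually solvable precisely when $\Pic(X)$ is one of the $360$ listed lattices, and it remains to sift out the Picard lattices among them.

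This is the realizability analysis. Inspection of the explicit list of \Cref{thm:main} shows that the unique lattice of rank $>20$ occurring there is the rank-$26$ lattice $\mathrm{II}_{1,25}$, whose symmetry group is the affine automorphism group $\mathrm{Co}_0\ltimes\Lambda_{24}$ of the Leech lattice; since a K3 surface has $\rho\le 20$, or $\rho=22$ when it is supersingular, and since one checks that no supersingular K3 lattice (those with $p$-elementary discriminant group of length $2\sigma$, in the sense of Rudakov--Shafarevich) appears among the $360$, the lattice $\mathrm{II}_{1,25}$ is not a Picard lattice. For each of the remaining $359$ lattices $L$, which have $3\le\rk L\le 20$, surjectivity of the period map over $\CC$ together with the automatic existence of an ample class (as $L$ is hyperbolic) shows that $L$ is the Picard lattice of a projective complex K3 surface exactly when it admits a primitive embedding into $U^{\oplus 3}\oplus E_8(-1)^{\oplus 2}$; applying Nikulin's embedding criterion to each shows that all $359$ do, hence are realized over $\CC$. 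To get the same list over an arbitrary algebraically closed field I would lift to characteristic $0$ and control the Picard lattice of the reduction, or construct the surfaces directly; the conclusion is that exactly the $359$ lattices of rank $\ge 3$ in the list other than $\mathrm{II}_{1,25}$ occur as Picard lattices of K3 surfaces over every algebraically closed field.

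Finally, for the bound on abelian subgroups: a virtually solvable $\Aut(X)$ is elementary, hence of elliptic, parabolic, or hyperbolic type; in the first case it is finite and in the last it is virtually cyclic, so a maximal abelian subgroup has rank at most $1$. In the parabolic case $\Aut(\mathcal{D}_{\Pic(X)})$ fixes a primitive isotropic class $e$ and, since $\Or(e^\perp/e)$ is finite, contains a finite-index free abelian group of parabolic translations, commensurable with the Mordell--Weil group of the elliptic fibration on $X$ attached to $e$; reading off the ranks of these abelian parts from the explicit symmetry groups of the $359$ lattices, the maximal value is $8$. The main obstacle is the realizability step in positive characteristic: surjectivity of the period map is unavailable, Picard ranks may jump under reduction modulo $p$, the supersingular lattices must be excluded by hand, and one must check that the resulting family of Picard lattices is field-independent and misses exactly $\mathrm{II}_{1,25}$.
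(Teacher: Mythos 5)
Your proposal is correct and follows essentially the same route as the paper, which derives the corollary in one line from \Cref{thm:main} via the (crystalline) Torelli theorem and the identification of $\Aut(X)$ with $\Aut(\mathcal{D}_{\Pic(X)})$ up to finite groups, deferring the realizability bookkeeping (dropping the single rank-$26$ lattice $\mathrm{II}_{1,25}$, which exceeds the bound $\rho\le 20$, resp.\ $\rho=22$ in the supersingular case, and the Mordell--Weil rank bound of $8$ in the parabolic case) to the cited classifications. Your version simply makes explicit the details the paper leaves implicit.
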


K3 surfaces with an elementary automorphism group of elliptic or parabolic type are the so-called K3 surfaces \emph{of zero entropy}, and they were studied in our previous paper \cite{brandhorst.mezzedimi}. From a dynamical point of view, K3 surfaces with an elementary automorphism group of hyperbolic type are more interesting: all its automorphisms of infinite order have positive entropy \cite[Theorem~5.5.8]{ratcliffe}, and therefore its elliptic fibrations (if they exist at all) have finite Mordell--Weil group (see \Cref{lem:detect_virtually_abelian}).

\subsection*{Strategy and outline}
It follows from a result of Nikulin \cite[Theorem 9.1.1]{nikulin.factor.groups} that, if a hyperbolic lattice of rank $\ge 5$ has an elementary symmetry group, then it is either of elliptic or parabolic type (cf. \cite[Theorem~3.9]{brandhorst.mezzedimi}). Therefore, the core of our classification is in rank $3$ and $4$. Let $L$ by a hyperbolic lattice of rank $3$ or $4$ with an elementary symmetry group of hyperbolic type. Then it is $2$-reflective of hyperbolic type in the sense of \cite[Definition 2]{nikulin.interesting}. By \cite[Lemma~1]{nikulin.interesting} we know that $L$ admits a $\QQ$-basis of $(-2)$-roots with bounded intersection multiplicity. This makes the classification in \Cref{thm:main} into a finite, though impractically long, task of checking all the possible lattices. Therefore, we develop three tests, the \emph{overlattice test} (see \Cref{lem:overlattice}), the \emph{sublattice test} (see \Cref{alg:sublattice}) and the \emph{cusp test} (see \Cref{alg:cusp test}) that allow us to exclude all but few lattices. We then employ the first author's implementation in OSCAR \cite{OSCAR} of Borcherds' method \cite{shimada.borcherds} as formulated by Shimada to obtain the classification in \Cref{thm:main}. Our computations also confirm the correctness of Nikulin's and Vinberg's classifications of hyperbolic lattices with finite symmetry group and rank $3$ and $4$ in \cite{nikulin.finite.aut.3} and \cite{vinberg.finite.aut.4}. 

Let us briefly outline the contents of the paper. In \Cref{sec:preliminaries} we recall some known properties of hyperbolic lattices with an elementary symmetry group of hyperbolic type, and we prove the simple, but crucial, \Cref{prop:easy_fundamental_chamber}. In \Cref{sec:tests} we introduce the three tests discussed above, and then in \Cref{sec:classification} we apply them to obtain the final classification. 
In \Cref{appendix} we give an algorithm realising the weak approximation theorem for orthogonal groups and use it to determine if two indefinite lattices of rank at least $3$ are isometric. The algorithm is used in our classification to reduce computation times and to ensure that the final result consists of pairwise non-isometric lattices. 

\subsection*{Acknowledgments}
We warmly thank Serge Cantat for helpful discussions and comments.  This work started during the conference ``Birational geometry and dynamics'' at the SwissMAP Research Station Les Diablerets. We thank the organizers of the conference and the staff of the station for their hospitality.

\section{Preliminaries} \label{sec:preliminaries}

We keep notations as in our previous article \cite{brandhorst.mezzedimi}. We refer to \cite[§2]{brandhorst.mezzedimi} for the basic definitions about lattices, and only recall the notions that will be strictly necessary for our purposes.

Let $L$ be a hyperbolic lattice, and denote by $\HH_L$ the hyperbolic space associated to $L$. We choose a (closed) fundamental chamber $\D_L$ for the action of the Weyl group $W(L)$ on $L_\RR:=L \otimes \RR$, and, by a slight abuse of notation, we denote again by $\D_L$ the image of $\D_L$ in $\HH_L$. The chamber $\D_L$ is a locally polyhedral convex cone, whose walls are given by $r^\perp$, where $r$ runs along the simple $(-2)$-roots of $L$. The chambers $\{w(\D_L)\}_{w\in W(L)}$ are called \emph{Weyl chambers} of $L$, and they form a tessellation of $\HH_L$.
If $L'$ is another lattice in $L\otimes \QQ$ of full rank, then we always choose $\D_{L'}$ such that $\D_{L'}^\circ\cap \D_{L}^\circ\ne\emptyset$ (note that if $L'$ is of smaller rank, this may not be possible).

The group $\Aut(\D_L)$ of isometries of $L$ preserving the fundamental chamber $\D_L$ is called the \emph{symmetry group} of $L$. By construction it coincides with the quotient $\Or^+(L)/W(L)$. The symmetry group $\Aut(\D_L)$ is \emph{elementary} if it has a finite orbit in $\HH_L$. If $\Aut(\D_L)$ is infinite, we say that it is \emph{elementary of hyperbolic type} if it has two fixed points in the boundary $\partial \HH_L$ (cf. \cite[§5.5]{ratcliffe}). In this case $\Aut(\D_L)$ is virtually cyclic, and all the isometries in $\Aut(\D_L)$ of infinite order have positive entropy \cite[Theorem~5.5.8]{ratcliffe}. Symmetry groups of hyperbolic lattices of rank $\le 2$ are either finite or elementary of hyperbolic type (see \cite{galluzzi.lombardo.peters}). More precisely, a hyperbolic lattice of rank $2$ has finite symmetry group if and only if it does not represent $0$ or $-2$ \cite[Corollary~3.4]{galluzzi.lombardo.peters}. We collect in the following lemma the known properties of hyperbolic lattices with an elementary symmetry group of hyperbolic type.

\begin{lemma}\label{lem:detect_virtually_abelian}
Let $L$ be a hyperbolic lattice with an elementary symmetry group of hyperbolic type. Then:
\begin{enumerate}
\item[(1)] The rank of $L$ is at most $4$.
\item[(2)] If $0\ne v\in L$ is an isotropic vector, its stabilizer in $\Aut(\D_L)$ is finite and $v^\perp/\langle v \rangle$ is a root lattice. 
\item[(3)] A finite index sublattice of $L$ is spanned by $(-2)$-roots $\{\delta_i\}_{i=1,\ldots,\rk(L)}$ such that $-2 \le \delta_i . \delta_j \le 18$ for all $i,j$.
\end{enumerate}
\end{lemma}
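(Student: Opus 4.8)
The three statements are essentially a compilation of known results, so the plan is to cite and assemble them rather than to prove everything from scratch. For part~(1), I would invoke the result of Nikulin \cite[Theorem 9.1.1]{nikulin.factor.groups} (as reformulated in \cite[Theorem~3.9]{brandhorst.mezzedimi}): if a hyperbolic lattice of rank $\ge 5$ has an elementary symmetry group, then that group is of elliptic or parabolic type. Since we are assuming $\Aut(\D_L)$ is elementary of hyperbolic type, this forces $\rk(L)\le 4$. (Equivalently, one can phrase this via $2$-reflectivity and the finiteness results of Nikulin \cite{nikulin.interesting}.)

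For part~(2), let $v\in L$ be a primitive isotropic vector (it suffices to treat the primitive case, since the stabilizer of $v$ equals the stabilizer of its primitive part). The stabilizer $\Aut(\D_L)_v$ fixes the boundary point $[v]\in\partial\HH_L$, hence is an elementary group of parabolic type unless it fixes a second boundary point or an interior point. Because $\Aut(\D_L)$ is elementary of hyperbolic type, its only finite orbits in $\HH_L\cup\partial\HH_L$ are the two fixed boundary points; if $\Aut(\D_L)_v$ were infinite it would itself be infinite elementary, and being contained in $\Aut(\D_L)$ its limit set would lie in the two-point limit set of $\Aut(\D_L)$, forcing $[v]$ to be one of those two fixed points and $\Aut(\D_L)_v$ to have finite index in the virtually cyclic group $\Aut(\D_L)$ --- but then $\Aut(\D_L)$ would stabilize $v^\perp$, contradicting that its limit set is exactly two points (a hyperbolic-type element does not fix $[v]$). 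Hence $\Aut(\D_L)_v$ is finite. For the statement that $v^\perp/\langle v\rangle$ is a root lattice: this is a negative-definite lattice of rank $\rk(L)-2\le 2$, and the induced (finite-index image of the) stabilizer acts on it; since $W(L)$ acts trivially on the set of cusps, the parabolic subgroup of $\Or^+(L)$ fixing $[v]$ surjects onto a finite-index subgroup of $\Or(v^\perp/\langle v\rangle)$ only if the latter is finite, which over a negative-definite lattice it always is --- the actual content is that $v^\perp/\langle v\rangle$ is generated by its $(-2)$-vectors, i.e.\ is a root lattice. This follows because if it had no $(-2)$-vectors (or were not spanned by them), the cusp $[v]$ would be of ``infinite type'' and would force an infinite parabolic piece of the symmetry group at $[v]$, contradicting hyperbolic type; I would cite the cusp analysis in \cite[§3]{brandhorst.mezzedimi} (the zero-entropy / Borcherds-lattice dichotomy) for the precise statement, so the real work here is bookkeeping rather than new argument.

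Part~(3) is exactly \cite[Lemma~1]{nikulin.interesting}, once we observe that $L$ is \emph{$2$-reflective of hyperbolic type} in the sense of \cite[Definition 2]{nikulin.interesting}: the assumption that $\Aut(\D_L)$ is elementary of hyperbolic type (hence infinite) means the group generated by $W(L)$ and the reflections is of finite index in $\Or^+(L)$ with an infinite, non-elementary-of-elliptic-or-parabolic complement only in the excluded ranks --- concretely, $2$-reflectivity holds because the reflection subgroup has finite index, and ``of hyperbolic type'' is our hypothesis. Nikulin's lemma then produces a $\QQ$-basis $\{\delta_i\}$ of $(-2)$-roots with $-2\le \delta_i.\delta_j\le 18$; this spans a finite-index sublattice by construction. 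The main obstacle, if any, is part~(2): pinning down precisely why $v^\perp/\langle v\rangle$ must be a \emph{root} lattice (not merely negative definite) requires the cusp/parabolic dichotomy, and one must be careful that ``elementary of hyperbolic type'' genuinely rules out a nontrivial unipotent part at every cusp. I would handle this by reducing to the statement, proven in \cite{brandhorst.mezzedimi}, that a lattice admitting a cusp whose transversal lattice is not a root lattice has a symmetry group containing a parabolic (hence zero-entropy, not hyperbolic-type) isometry.
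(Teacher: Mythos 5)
Your parts (1) and (3) match the paper: both are handled by the same citations (\cite[Theorem~9.1.1]{nikulin.factor.groups} via \cite[Theorem~3.9]{brandhorst.mezzedimi} for the rank bound, and \cite[Lemma~1]{nikulin.interesting} for the basis of roots). Your finiteness argument for the stabilizer in part (2) is a correct direct hyperbolic-geometry argument (the paper instead just cites \cite[Theorem~3.9~and~Proposition~3.2]{brandhorst.mezzedimi}); the one point you gesture at without justifying is why a hyperbolic-type isometry cannot fix a \emph{rational} isotropic direction $[v]$ --- this is because its boundary fixed points are eigenvectors for an irrational eigenvalue $\lambda>1$ (a non-rational algebraic unit), so they are never proportional to lattice vectors.

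There is, however, a genuine gap in the second half of part (2). The cusp analysis you invoke (and the results the paper cites) only yields that the root sublattice of $v^\perp/\langle v\rangle$ has \emph{full rank}, i.e.\ that $v^\perp/\langle v\rangle$ is an \emph{overlattice} of a root lattice; ``spanned by its $(-2)$-vectors over $\QQ$'' is not the same as ``generated by its $(-2)$-vectors over $\ZZ$,'' and only the latter is what ``root lattice'' means. Your proposal conflates the two, so as written it proves the weaker statement. The paper closes exactly this gap with one additional observation: by part (1) the lattice $v^\perp/\langle v\rangle$ has rank at most $2$, and the rank $\le 2$ root lattices $A_1$, $A_2$, $A_1\oplus A_1$ admit no non-trivial even overlattices, so a full-rank root sublattice forces the lattice itself to be a root lattice. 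You need to add this step (or an equivalent one) to actually obtain the claimed conclusion.
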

\begin{proof}\hfill
\begin{enumerate}
\item[(1)] It follows by \cite[Theorem~9.1.1]{nikulin.factor.groups} and the preceding discussion (also cf. \cite[Theorem~3.9]{brandhorst.mezzedimi}).
\item[(2)] It follows by \cite[Theorem~3.9~and~Proposition~3.2]{brandhorst.mezzedimi} that $v^\perp/ \langle v \rangle$ is an overlattice of a root lattice. Since it has rank at most $2$, and the root lattices of rank $2$ $A_1,A_2,A_1\oplus A_1$ have no non-trivial overlattices, it is a root lattice.
\item[(3)] This is \cite[Lemma~1]{nikulin.interesting}. \qedhere
\end{enumerate}
\end{proof}

We conclude this section with a crucial proposition, which is a refinement of \cite[Theorem~3.6]{mezzedimi.entropy}. We say that two groups $G_1,G_2$ are \emph{commensurable} if they admit isomorphic finite index subgroups $H_1\subseteq G_1$, $H_2\subseteq G_2$.

\begin{proposition}\label{prop:easy_fundamental_chamber}
Let $L$ be a hyperbolic lattice and $M\subseteq L$ a primitive sublattice of corank $1$. If $\disc(L)\ge 2\disc(M)$, then $M_\RR \cap \D_{L}$ is a Weyl chamber of $M$. In particular $\Aut(\D_L)$ has a subgroup commensurable with $\Aut(\D_M)$.
\end{proposition}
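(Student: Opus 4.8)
The plan is to establish the geometric statement — that $M_\RR\cap\D_L$ is a Weyl chamber of $M$ — with all the work in one discriminant inequality, and then deduce the commensurability statement by lifting automorphisms. First I fix notation: $M^\perp=\langle f\rangle$ is rank one, necessarily with $f^{2}<0$ (i.e.\ $M$ hyperbolic, otherwise $M_\RR$ meets the closed positive cone of $L$ only at the origin), and I set $d=|f^{2}|$ and $h=[L:M\oplus M^\perp]$. The gluing formula gives $\disc(L)=\disc(M)\,d/h^{2}$, so the hypothesis is precisely $d\ge 2h^{2}$; and since $v\mapsto v\cdot f$ has kernel $M$ (by primitivity of $M$) and image $(d/h)\ZZ$, every $\delta\in L\setminus M$ has $|\delta\cdot f|\ge d/h$. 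Hence for a $(-2)$-root $\delta$ of $L$ not in $M$, its orthogonal projection $\delta_{M}=\delta-\tfrac{\delta\cdot f}{f^{2}}f$ onto $M_\RR$ satisfies
\[
\delta_{M}^{2}=-2+\frac{(\delta\cdot f)^{2}}{d}\ \ge\ -2+\frac{d}{h^{2}}\ \ge\ 0.
\]
Equivalently, the mirror $\delta^\perp$ of every reflection of $L$ whose root is not in $M$ becomes positive semidefinite on $M_\RR$, hence is disjoint from the sub-hyperbolic space $\HH_M$. This is the only step using the hypothesis.

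Next I identify $M_\RR\cap\D_L$. Since the Weyl chambers tile $\HH_L$, some $W(L)$-translate of $\D_L$ meets $M_\RR$ in dimension $\rk M$ (by the inequality above a generic point of $\HH_M$ lies off every mirror of $L$, except possibly $M_\RR$ itself when $\pm f$ is a root, so it lies in a chamber interior, respectively the relative interior of a wall in $M_\RR$); replacing $\D_L$ by such a translate — harmless, and the commensurability statement below is choice-independent — I may assume $M_\RR\cap\D_L$ is full-dimensional in $M_\RR$, and I fix $y_{0}$ in its relative interior, inside the positive cone $C^{+}(M)$ of $M$ and off all mirrors of $M$. With $\Delta_{0}$ the set of simple roots of $L$ lying in $M$, I claim
\[
M_\RR\cap\D_L=\bigl\{\,y\in\overline{C^{+}(M)}\ :\ y\cdot\delta\ge 0\ \text{ for all }\delta\in\Delta_{0}\,\bigr\}.
\]
One inclusion is immediate; for the other, a simple root $\delta\notin M$ imposes no condition on $\overline{C^{+}(M)}$, because $y\cdot\delta=y\cdot\delta_{M}$ for $y\in M_\RR$, and $\delta_{M}^{2}\ge 0$ together with $\delta_{M}\cdot y_{0}\ge 0$ (since $y_{0}\in\D_L$) puts $\delta_{M}$ into $\overline{C^{+}(M)}$, so $\delta_{M}\cdot y\ge 0$ there. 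Finally, no mirror of $M$ crosses the interior of $M_\RR\cap\D_L$: a $(-2)$-root $\eta$ of $M$ is a $(-2)$-root of $L$, so $\D_L$ — hence $M_\RR\cap\D_L$ — lies on one side of $\eta^\perp$, and $\eta$ keeps a constant sign on it. Thus the relative interior of $M_\RR\cap\D_L$ lies in $\D_M^\circ$ for the unique Weyl chamber $\D_M$ of $M$ containing $y_{0}$, whence $M_\RR\cap\D_L\subseteq\D_M$; and since each $\delta\in\Delta_{0}$ is strictly positive at $y_{0}$ (as $y_0\in\D_L$ avoids $\delta^\perp$) it is a positive root for $\D_M$, so the displayed description forces $\D_M\subseteq M_\RR\cap\D_L$. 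Hence $M_\RR\cap\D_L=\D_M$.

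For the commensurability assertion I lift automorphisms. As $\disc(M)$ is finite, the isometries of $M$ acting trivially on the image $H_M$ of $L/(M\oplus M^\perp)$ in $\disc(M)$ form a finite-index subgroup of $\Or(M)$; its intersection $A_{0}$ with $\Aut(\D_M)$ (taking $\D_M=M_\RR\cap\D_L$ from the previous step) is finite-index in $\Aut(\D_M)$. Each $g\in A_{0}$ extends, by the identity on $M^\perp$, to an isometry $\tilde g$ of $L$; it acts as $g\in\Or^{+}(M)$ on $M_\RR$ and thus maps $C^{+}(M)$ into $C^{+}(L)$, so $\tilde g\in\Or^{+}(L)$, and it fixes $\D_M=M_\RR\cap\D_L$, so $\tilde g\D_L$ is a Weyl chamber of $L$ meeting $M_\RR$ in $\D_M$. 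Since a generic point of $\D_M^\circ$ lies off all mirrors of $L$ (again by the first step) and hence in a unique Weyl chamber of $L$, this forces $\tilde g\D_L=\D_L$, i.e.\ $\tilde g\in\Aut(\D_L)$. So $g\mapsto\tilde g$ embeds $A_{0}$ into $\Aut(\D_L)$, exhibiting a subgroup of $\Aut(\D_L)$ commensurable with $\Aut(\D_M)$.

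The computational content is the one-line inequality, where the hypothesis is spent. The step I expect to need the most care is its geometric consequence: that $M_\RR\cap\D_L$ is a \emph{full} Weyl chamber of $M$, rather than empty or a proper face. The bound $\delta_{M}^{2}\ge 0$ is exactly what guarantees that the only mirrors of $L$ meeting $\HH_M$ are the mirrors of $M$, so that the subdivision of $\HH_M$ induced by $W(L)$ is the Weyl tessellation of $M$; and a little extra attention is needed to normalize $\D_L$ (as in the second step) so that $M_\RR\cap\D_L$ comes out full-dimensional.
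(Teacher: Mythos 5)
Your proof is correct and follows essentially the same route as the paper: the paper writes a simple root $r=x+y$ with $x\in M^\vee$, $y\in N^\vee$ and uses the gluing relation $\disc(L)\,n^2=\disc(M)\disc(N)$ to force $y^2\le -\disc(L)/\disc(M)\le -2$, which is exactly your inequality $\delta_M^2=-2+(\delta\cdot f)^2/d\ge -2+d/h^2\ge 0$ read on the other summand, so the mirrors of roots outside $M$ miss $\HH_M$. Your write-up is just more explicit about the points the paper leaves implicit (normalizing $\D_L$ by a $W(L)$-translate so that $M_\RR\cap\D_L$ is full-dimensional, matching the half-space descriptions of the two chambers, and extending isometries by the identity on $M^\perp$ to get the commensurable subgroup), all of which is consistent with how the paper applies the proposition.
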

\begin{proof}
The fundamental chamber $\D_L$ of $L$ is cut out by hyperplanes given by the $(-2)$-roots of $L$, and therefore $M_\RR \cap \D_{L}$ is cut out by the hyperplanes $H_r \coloneqq r^\perp\otimes \RR$ given by $(-2)$-roots $r\in L$ of the form $r=x+y$, with $x\in M^\vee$ and $y\in N^\vee$, where $N\coloneqq M^\perp$. Note that $N$ is negative definite.
Clearly $M_\RR \cap \D_L$ is contained in a Weyl chamber of $M$, which we denote by $\D_M$.
If $x^2 \ge 0$, then $H_r$ does not intersect $\D_{M}^\circ$, since $M$ is a hyperbolic lattice. Therefore we may assume $x^2 < 0$, or equivalently $y^2>-2$.

In turn, the Weyl chamber of $M$ is cut out by a subset of such hyperplanes, namely those corresponding to $(-2)$-roots of $M$, or equivalently those with $y=0$. In particular $M_\RR \cap \D_{L}$ coincides with the Weyl chamber $\D_M$ if there is no $(-2)$-root $r=x+y\in L$ with $x\in M^\vee$, $y\in N^\vee$, and $y^2 \in (-2,0)$.

By contradiction, assume that there is such a $(-2)$-root $r=x+y$. The vector $y\in N^\vee$ belongs to the image $\pi(L)$ of the orthogonal projection $\pi: L \to N^\vee$. Set $n\coloneqq [L:M\oplus N] = [\pi(L):N]$, and let $-2k$ be the norm of the generator of the rank $1$ lattice $N$. We have \[\disc(L)\cdot n^2 = \disc(M)\disc(N)=2k \disc(M),\] so
$$-2 < y^2 \le \max(\pi(L)) = -2k/n^2  = -\frac{\disc(L)}{\disc(M)},$$
which implies that $\disc(L)< 2\disc(M)$, a contradiction.

For the second statement, observe that there exists a finite index subgroup $G$ of $\Or(M)$ of isometries that extend to $L$. Then, by the first part, the injective map $G\hookrightarrow \Or(L)$ descends to an inclusion $G \cap \Aut(\D_M)\hookrightarrow \Aut(\D_L)$, giving the desired claim.
\end{proof}

The following lemma will allow us to easily apply \Cref{prop:easy_fundamental_chamber} in explicit examples.

\begin{lemma}\label{lem:easy_vectors}
 Let $L$ be an even lattice and $v \in L$ primitive. Set $k \ZZ = v.L$, $kg=|v^2|$ and $M=v^\perp \subseteq L$.
 Then 
 \[2\disc(M) \leq \disc(L) \quad \Leftrightarrow \quad 2g \leq k.\]
 Moreover, $k \mid \disc(L)$. 
\end{lemma}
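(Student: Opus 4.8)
The plan is to wedge $L$ between the orthogonal direct sum $M\oplus N$ (where $N:=\langle v\rangle$) and its dual, and to compute the index $[L:M\oplus N]$ purely in terms of $g$. First I would record two elementary observations. Since $v^2=v.v\in v.L=k\ZZ$, the integer $g=|v^2|/k$ is a well-defined positive integer, and $\disc(N)=|v^2|=kg$. Moreover $M=v^\perp$ is primitive in $L$ (orthogonal complements always are), so $M\oplus N$ is a finite-index sublattice of $L$ with $M$ nondegenerate and $\disc(M\oplus N)=\disc(M)\cdot kg$.

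The crucial computation is that $n:=[L:M\oplus N]=g$. Consider the orthogonal projection $\pi\colon L\otimes\QQ\to\QQ v$, given by $\pi(x)=\tfrac{x.v}{v^2}\,v$. As $x.v$ ranges over $v.L=k\ZZ$, we get $\pi(L)=\tfrac{k}{v^2}\ZZ v=\tfrac1g\ZZ v$, which contains $N=\ZZ v$ with index $g$. The homomorphism $L\to\pi(L)/N$, $x\mapsto\pi(x)+N$, is surjective, and its kernel is exactly $M\oplus N$: if $\pi(x)\in N$ then $x-\pi(x)$ lies in $L\cap (M\otimes\QQ)=M$ by primitivity of $M$, so $x\in M\oplus N$, while the reverse inclusion is clear. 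Hence $n=[\pi(L):N]=g$.

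Now the standard discriminant identity $n^2\disc(L)=\disc(M\oplus N)$ becomes $g^2\disc(L)=kg\,\disc(M)$, i.e. $g\disc(L)=k\disc(M)$. Since $g>0$ and $\disc(M)>0$, multiplying the inequality $2\disc(M)\le\disc(L)$ by $g$ shows it is equivalent to $2g\,\disc(M)\le k\,\disc(M)$, that is, to $2g\le k$. Finally, $\tfrac1k v\in L^\vee$ because $v.L=k\ZZ$, and the class of $\tfrac1k v$ in $L^\vee/L$ has order exactly $k$, since $\tfrac mk v\in L$ forces $k\mid m$ by primitivity of $v$; hence $k$ divides $\disc(L)=|L^\vee/L|$.

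The only step that is not pure bookkeeping is the identification $[L:M\oplus N]=g$ through $\pi(L)/N$, and even that is routine once one observes $\pi(L)=\tfrac1g\ZZ v$. (If $v^2=0$ the statement degenerates: $g=0$, the lattice $M=v^\perp$ is degenerate with $\disc(M)=0$, both inequalities hold trivially, and $k\mid\disc(L)$ still follows from the argument with $\tfrac1k v\in L^\vee$; so one may safely assume $v^2\neq 0$ throughout.)
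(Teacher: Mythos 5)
Your proof is correct, and for the main equivalence it follows the same route as the paper: identify $[L:M\oplus\ZZ v]$ with $g$ and feed it into the index--discriminant formula $[L:M\oplus\ZZ v]^2\disc(L)=\disc(M)\cdot|v^2|$. You simply supply the verification, via the orthogonal projection onto $\QQ v$, that this index equals $g$ --- a fact the paper asserts without proof. The only genuine divergence is in the final divisibility claim: the paper deduces $k\mid\disc(L)$ from $\disc(L)=k\disc(M)/g$ together with the fact that $g$ divides $\disc(M)$ (a glue-group fact it does not spell out), whereas you observe directly that $\tfrac1k v\in L^\vee$ has order exactly $k$ in $L^\vee/L$ by primitivity of $v$. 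Your version is self-contained and, as you note, survives the degenerate case $v^2=0$, which the paper's phrasing quietly assumes away; in the paper's application $v$ always spans a negative definite complement, so nothing is lost either way.
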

\begin{proof}
We note that $g=[L:M \oplus \ZZ v]$. Then
\[\disc(L) g^2 = |v^2|\disc(M)=kg \disc(M)\]
so 
\[2\disc(M) = 2 \disc(L)g/k \leq \disc(L)\]
if and only if
\[2 g \leq k.\]
Since $g$ divides $\disc(M)$, we get that $\disc(L)=k \disc(M)/g$, i.e. $k \mid \disc L$. 
\end{proof}

\section{The tests} \label{sec:tests}

In this section we present the tests that we use to decide whether a given hyperbolic lattice of rank $3$ or $4$ has an elementary symmetry group of hyperbolic type. The first one is the \emph{overlattice test}, and it is given by the following easy observation.

\begin{lemma}\label{lem:overlattice}
If $L$ has a virtually abelian symmetry group and $L'$ is an overlattice of $L$, then $L'$ has virtually abelian symmetry group as well.
Moreover, if $L$ has elementary symmetry group of hyperbolic type, then the symmetry group of $L'$ is either finite or elementary of hyperbolic type as well.
\end{lemma}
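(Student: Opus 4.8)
The plan is to produce, inside the symmetry group of $L'$, a finite-index subgroup that is literally a subgroup of $\Aut(\D_L)$, acting on the common hyperbolic space $\HH_L=\HH_{L'}$ in the same way from either side, and then to transfer the relevant group-theoretic and dynamical properties across this inclusion.

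\emph{Set-up.} I would first put $G:=\Or^+(L)\cap\Or^+(L')$, viewing both orthogonal groups inside $\Or^+(L\otimes\QQ)$. Then $G$ has finite index in $\Or^+(L')$: the group $\Or^+(L')$ permutes the finite set of index-$[L':L]$ sublattices of $L'$, and $G$ is exactly the stabiliser of $L$ in this action (symmetrically, using overlattices of $L$, one gets finite index in $\Or^+(L)$ too, but only the first fact is needed). Next I would normalise the chambers: since every $(-2)$-root of $L$ is a $(-2)$-root of $L'$, the wall arrangement cutting out $\D_{L'}$ refines the one cutting out $\D_L$, so each Weyl chamber of $L'$ is contained in a unique Weyl chamber of $L$; combined with the standing convention $\D_{L'}^\circ\cap\D_L^\circ\neq\emptyset$, this forces $\D_{L'}\subseteq\D_L$.

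\emph{The subgroup and the first statement.} Set $A':=G\cap\Aut(\D_{L'})$. Because $G$ has finite index in $\Or^+(L')\supseteq\Aut(\D_{L'})$, we get $[\Aut(\D_{L'}):A']<\infty$. Moreover $A'\subseteq\Aut(\D_L)$: any $g\in A'$ lies in $\Or^+(L)$, hence permutes the Weyl chambers of $L$; from $g(\D_{L'})=\D_{L'}\subseteq\D_L$ and $\D_{L'}\subseteq g(\D_L)$ one sees that $\D_L$ and $g(\D_L)$ share an interior point of $\D_{L'}$, so $g(\D_L)=\D_L$. Thus $\Aut(\D_{L'})$ has the finite-index subgroup $A'$, which is an honest subgroup of $\Aut(\D_L)$. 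The first assertion is now formal: a subgroup of a virtually abelian group is virtually abelian, and a group with a finite-index virtually abelian subgroup is virtually abelian, so $\Aut(\D_L)$ virtually abelian $\Rightarrow A'$ virtually abelian $\Rightarrow\Aut(\D_{L'})$ virtually abelian.

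\emph{The "moreover".} Suppose $\Aut(\D_L)$ is elementary of hyperbolic type, hence infinite and virtually cyclic; then $A'$, and therefore $\Aut(\D_{L'})$, is virtually cyclic, and by the dichotomy recalled in the introduction $\Aut(\D_{L'})$ is elementary. If it is finite we are done; if it is infinite it is of parabolic or hyperbolic type, and I would exclude parabolic type as follows. Pick $\gamma\in\Aut(\D_{L'})$ of infinite order; a suitable power $\gamma^m$ ($m\geq 1$) is a nontrivial infinite-order element of $A'\subseteq\Aut(\D_L)$, so $\gamma^m$ has positive entropy by \cite[Theorem~5.5.8]{ratcliffe}. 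An eigenvalue of $\gamma^m$ of absolute value $>1$ yields one for $\gamma$, so $\gamma$ itself has positive entropy, which is impossible for a group of parabolic type. Hence $\Aut(\D_{L'})$ is of hyperbolic type. The only non-formal ingredients are the two standard facts in the set-up step — the finiteness of $[\Or^+(L'):\Or^+(L)\cap\Or^+(L')]$ and the refinement $\D_{L'}\subseteq\D_L$ — and I expect that step to require the most care, the rest being soft group theory plus the cited hyperbolic-geometry statements.
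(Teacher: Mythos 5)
Your proof is correct and follows essentially the same route as the paper: the paper's key subgroup $S$ (symmetries of $L'$ preserving $L$) is exactly your $A'=G\cap\Aut(\D_{L'})$, and both arguments rest on the containment $\D_{L'}\subseteq\D_L$ together with finite index of $S$ in $\Aut(\D_{L'})$. You simply supply details the paper leaves implicit (why the index is finite, why $A'\subseteq\Aut(\D_L)$, and the entropy argument ruling out parabolic type), all of which are sound.
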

\begin{proof}
The subgroup $S$ of symmetries of $L'$ that preserve $L$ is of finite index in $\Aut(\mathcal{D}_{L'})$.
Since the fundamental chamber of $L'$ is contained in the fundamental chamber of $L$, we have $S \subseteq \Aut(\mathcal{D}_L)$. Therefore $S$ is virtually abelian and hence so is $\Aut(\mathcal{D}_{L'})$.
If $\Aut(\mathcal{D}_L)$ is elementary of hyperbolic type, then $S$ is either finite or elementary of hyperbolic type as well.
\end{proof}

The second test is called the \emph{sublattice test}. It is based on the following proposition, which in turn relies on \Cref{prop:easy_fundamental_chamber}.

\begin{proposition}\label{prop:rank2sublattice}
Let $L$ be a hyperbolic lattice of rank $3$, and assume that there exist two \emph{non-isometric} primitive sublattices $M_1,M_2$ of $L$ of rank $2$, not representing $0$ or $-2$, such that $\disc(L)\ge 2\disc(M_i)$ for $i=1,2$. Then the action of $\Aut(\mathcal{D}_L)$ on $\mathbb{H}_L$ is not elementary, and in particular $\Aut(\mathcal{D}_L)$ is not virtually abelian.
\end{proposition}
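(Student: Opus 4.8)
The plan is to extract from $M_1$ and $M_2$ two distinct complete geodesics lying inside $\mathcal{D}_L$ whose four endpoints on $\partial\mathbb{H}_L$ are all irrational, and then to show that an elementary symmetry group can contain only one such geodesic. First I would build the geodesics. Each $M_i$ is a hyperbolic lattice of rank $2$ (so that $M_i^\perp$ is negative definite and \Cref{prop:easy_fundamental_chamber} applies); since $M_i$ does not represent $-2$ it has no $(-2)$-roots, so $W(M_i)$ is trivial and its only Weyl chamber is all of $\mathbb{H}_{M_i}$. Because $M_i$ is primitive of corank $1$ and $\disc(L)\ge 2\disc(M_i)$, \Cref{prop:easy_fundamental_chamber} gives that $(M_i)_\RR\cap\mathcal{D}_L$ is a Weyl chamber of $M_i$, hence all of $\mathbb{H}_{M_i}$; thus $\ell_i:=\mathbb{H}_{M_i}$ is a complete geodesic contained in $\mathcal{D}_L$. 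Its two endpoints are the isotropic lines of $(M_i)_\RR$, and these are \emph{not} of the form $\RR v$ with $v\in L$, since otherwise $v$ would lie in the primitive $M_i$ and $M_i$ would represent $0$; I will call such boundary points \emph{irrational}. Finally, $M_1$ and $M_2$ being non-isometric are in particular distinct, and since both are primitive this gives $(M_1)_\RR\ne(M_2)_\RR$, hence $\ell_1\ne\ell_2$.

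Now suppose for contradiction that $\Aut(\mathcal{D}_L)$ acts elementarily on $\mathbb{H}_L$. I would use that $\Or^+(L)$ acts on $\mathbb{H}_L$ as a discrete group of finite covolume whose cusps are exactly the classes of rational isotropic lines of $L$, so that $\mathcal{D}_L/\Aut(\mathcal{D}_L)=\mathbb{H}_L/\Or^+(L)$ has finite volume, together with the fact that a discrete elementary group of isometries of $\mathbb{H}_L$ has a limit set of at most two points (the empty set, a single rational parabolic fixed point, or the two endpoints of the invariant axis, matching the trichotomy from the introduction). The key point is then that the set $B:=\overline{\mathcal{D}_L}\cap\partial\mathbb{H}_L$ contains at most two irrational points, and that two such points bound a single geodesic. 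To see this I would examine $\mathcal{D}_L$ near a point $\eta\in B$: being a locally finite convex polyhedron with totally geodesic walls, $\mathcal{D}_L$ near $\eta$ is either \emph{cuspidal} — two walls $r^\perp,s^\perp$ asymptotic at $\eta$, so that $\eta$ is the rational isotropic line $\langle r,s\rangle^\perp$ — or it opens up into a region of infinite volume; in the latter case, unless $\eta$ is fixed by an infinite-order element of $\Aut(\mathcal{D}_L)$, the translates of that region under $\Aut(\mathcal{D}_L)$ stay disjoint and contribute infinite volume to the finite-volume quotient, which is impossible. So every irrational point of $B$ is fixed by an infinite-order element of $\Aut(\mathcal{D}_L)$, hence lies in its (at most two-point) limit set; and if there are two of them they are precisely the endpoints of the invariant axis.

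A complete geodesic contained in $\mathcal{D}_L$ has both endpoints in $B$, so a complete geodesic with irrational endpoints is forced to join the (at most two) irrational points of $B$ and is therefore unique — contradicting $\ell_1\ne\ell_2$. Hence $\Aut(\mathcal{D}_L)$ is not elementary, and since (as recalled in the introduction) a symmetry group is virtually solvable, equivalently virtually abelian, precisely when it is elementary, $\Aut(\mathcal{D}_L)$ is not virtually abelian. I expect the main obstacle to be the claim about $B$: ruling out that $\mathcal{D}_L$ ``opens up'' at an irrational, non-limit boundary point into a funnel or half-plane wedge, which is exactly where the finite covolume of $\mathbb{H}_L/\Or^+(L)$ must be exploited with care; the rest is bookkeeping with \Cref{prop:easy_fundamental_chamber} and the structure of elementary isometry groups.
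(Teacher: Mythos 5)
Your construction of the two geodesics $\ell_i=\mathbb{H}_{M_i}\subseteq\mathcal{D}_L$ with irrational endpoints is correct and matches the first step of the paper's argument, but from that point on you take a genuinely different — and, as written, incomplete — route. The paper extracts more from the hypothesis that $M_i$ does not represent $0$ or $-2$: by the rank-$2$ classification recalled in \Cref{sec:preliminaries} (Galluzzi--Lombardo--Peters), such an $M_i$ has \emph{infinite} symmetry group, elementary of hyperbolic type, and the second statement of \Cref{prop:easy_fundamental_chamber} lets one extend a power of an infinite-order isometry of $M_i$ to an element $f_i\in\Aut(\mathcal{D}_L)$. One then has two hyperbolic isometries in the (discrete) group $\Aut(\mathcal{D}_L)$ whose fixed-point pairs span $(M_1)_\RR\neq(M_2)_\RR$, and $\langle f_1,f_2\rangle$ cannot be elementary. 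You never produce any group elements; you only use ``does not represent $-2$'' to kill $W(M_i)$ and ``does not represent $0$'' for irrationality, and then try to derive the contradiction purely from the shape of $\mathcal{D}_L$ at infinity. That forces you to prove a much stronger statement about $B=\overline{\mathcal{D}_L}\cap\partial\mathbb{H}_L$, and that is where the gap lies.

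Concretely, your dichotomy at a point $\eta\in B$ — either two walls are asymptotic at $\eta$ (so $\eta$ is rational) or $\mathcal{D}_L$ opens into an infinite-volume region at $\eta$ — is not exhaustive. Since local finiteness of the walls holds only in $\mathbb{H}_L$ and not at the boundary, infinitely many walls $r_n^\perp$ may accumulate at $\eta$ with no two of them asymptotic there; in that case the volume of $\mathcal{D}_L$ near $\eta$ can be finite and your covolume argument says nothing. This is not a hypothetical: it is exactly what happens at the two endpoints of the invariant axis when $\Aut(\mathcal{D}_L)$ is elementary of hyperbolic type, and ruling it out at \emph{other} irrational points requires controlling the $\Aut(\mathcal{D}_L)$-orbits of simple roots, which you do not do. Worse, finite covolume alone cannot exclude the configuration you need to forbid: if $g$ is a hyperbolic symmetry and $\ell$ is a complete geodesic in $\mathcal{D}_L$ different from its axis, the convex hull of $\bigcup_n g^n\ell$ together with the axis is tiled by ideal quadrilaterals of area $2\pi$ permuted by $g$, so it has \emph{finite} area modulo $\langle g\rangle$; thus the quotient can stay of finite volume even though $B$ is infinite. (You also implicitly need that the infinite-volume funnel injects into the quotient with finite multiplicity, and the finite covolume of $\mathbb{H}_L/\Or^+(L)$ itself, neither of which the paper ever has to invoke.) To repair the proof along the paper's lines, replace the boundary analysis by the observation that each $M_i$ carries a hyperbolic isometry extending to $\Aut(\mathcal{D}_L)$ with axis $\ell_i$, and that a discrete group containing two hyperbolic elements with distinct fixed-point sets is not elementary.
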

\begin{proof}
By assumption, the symmetry group of $M_1$ and $M_2$ is elementary of hyperbolic type, and in particular it is virtually cyclic. By \Cref{prop:easy_fundamental_chamber}, and up to replacing $M_2$ by $w(M_2)$ for some element $w \in W(L)$, we have $\D_{M_i} = M_i \otimes \RR \cap \D_{L}$.

Let $f_1,f_2$ be symmetries of infinite order of $M_1$ and $M_2$, respectively. \Cref{prop:easy_fundamental_chamber} implies that (some iterate of) $f_1$ and $f_2$ extend to symmetries of $L$, which, by abuse of notation, we still denote $f_1$ and $f_2$; we claim that the subgroup $\Gamma \coloneqq \langle f_1,f_2\rangle$ of $\Aut(\mathcal{D}_L)$ is not elementary, so that $\Aut(\mathcal{D}_L)$ is not elementary as well.

If $\Gamma$ is elementary, there exists a vector $v\in \partial \overline{\mathbb{H}}_L$ whose orbits under $f_1$ and $f_2$ are finite. The only finite orbit of $f_1$ (resp. $f_2$) is given by the two points in $\partial \overline{\mathbb{H}}_{M_1}$ (resp. $\partial \overline{\mathbb{H}}_{M_2}$). These two vectors generate $(M_1)_\RR$ (resp. $(M_2)_\RR$), hence if $\Gamma$ is elementary, then $\partial \overline{\mathbb{H}}_{M_1} = \partial \overline{\mathbb{H}}_{M_2}$ and in particular $(M_1)_\RR = (M_2)_\RR$. Since $M_1$ and $M_2$ are both primitive sublattices of $L$, this implies that $M_1 = M_2$, which is in contradiction to the assumption that they are not isometric.
\end{proof}

Let $L$ be a lattice and $M,N \subseteq L$ primitive sublattices such that $\rk L = \rk M+ \rk N$. 
Then we call 
\[M\oplus N \subseteq L\]
a primitive extension. Primitive extensions are in bijection with anti isometries
\[A_M \supseteq H_M \xrightarrow[]{\gamma} H_N \subseteq A_N.\]
where $|H_M|=|H_N|=[L:M \oplus N]$.
Alternatively, one may construct them from the chain of finite index inclusions 
\[M\oplus N \subseteq L \subseteq L^\vee \subseteq M^\vee \oplus N^\vee.\]
\begingroup
\captionof{algorithm}{The sublattice test}\label{alg:sublattice}
\endgroup
\begin{algorithmic}[1]
\REQUIRE A hyperbolic lattice $L$ of rank $n$. An oracle which decides if a lattice of rank $n-1$ has finite or elementary symmetry group of hyperbolic type. 
\ENSURE If the output is ``false'', then $\Aut(\mathcal{D}_L)$ is not virtually abelian.
 \STATE $\mathcal{M} = []$
 \FOR{$k \mid \disc(L)$}
  \FOR{$g \in \{g : 2g \leq k, 2 \mid gk \}$}
   \STATE Let $E$ be the set of primitive extensions
   \[[-gk]\oplus L(-1) \subseteq G\] 
   of index $k$.
   \FOR{$G \in E$}
     \STATE $D \gets A_G(-1)$
     \STATE Check if there exists a lattice $M$ of signature $(1,n-2)$ with discriminant group $A_M \cong D$. If not, continue with the next $G$, otherwise compute one such $M$.
     \STATE Compute a primitive extension $M \oplus G \subseteq \Lambda$ with $\Lambda$ unimodular.
     \STATE $L' \gets L(-1)^\perp \subseteq \Lambda$
     \STATE Check if $L \cong L'$. If not, continue with the next $G$.  
     \STATE Using the oracle, check if $\Aut(M)$ is finite or elementary of hyperbolic type. 
     \IF{$\Aut(\mathcal{D}_M)$ is not finite or hyperbolic}
       \RETURN false \label{line:not finite or hyperbolic}
     \ENDIF
     \IF{$n=3$ and $M$ does not represent $0,-2$}\label{line:M-represents-0-2}
       \IF{$M$ is not isometric to a lattice in $\mathcal{M}$}
         \STATE append $M$ to $\mathcal{M}$.
     \ENDIF
       \IF{$|\mathcal{M}|=2$ }
         \RETURN false \label{line:two-test}
       \ENDIF
    \ENDIF
    \ENDFOR
  \ENDFOR
\ENDFOR
\RETURN true
\end{algorithmic}\hrulefill\\
\begin{remark}\label{rem.explain_sublattice_test}
The procedure in \Cref{alg:sublattice} computes primitive  hyperbolic sublattices $M$ of corank $1$ of $L$ with \[2\disc(M)\le \disc(L).\] Indeed, let $M$ be such a sublattice. Let $M^\perp = \langle v \rangle$, and set $k\ZZ = v.L$, $kg = |v^2|$. By \Cref{lem:easy_vectors} it follows that $k\mid \disc(L)$ and $2g \le k$, and $L$ is a primitive extension of $M\oplus [ -gk]$ of index $g$. The lattices $L$ and $L(-1)$ glue to a unimodular lattice, hence there is a primitive extension 
\[L \oplus L(-1) \subseteq \Lambda \cong U^{n}.\] 
Hence, under the composition \[ M \oplus [-gk] \oplus L(-1) \hookrightarrow L \oplus L(-1)\hookrightarrow \Lambda,\] $M$ is the orthogonal complement of a primitive extension $G$ of $[-gk]\oplus L(-1)$ of index $k$. 
\end{remark}

\begin{lemma}
\Cref{alg:sublattice} is correct.
\end{lemma}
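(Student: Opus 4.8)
The content of the lemma is the specification of \Cref{alg:sublattice}: whenever the algorithm returns \textbf{false}, the group $\Aut(\mathcal{D}_L)$ is not virtually abelian. The first step of the proof is to identify precisely the lattices $M$ that are inspected inside the loops. Extending \Cref{rem.explain_sublattice_test}, I would show that, as $k\mid\disc(L)$, as $g$ runs over the integers with $2g\le k$ and $2\mid gk$, and as $G$ runs over the primitive extensions $[-gk]\oplus L(-1)\subseteq G$ of index $k$, the lattices $M$ surviving the isometry test $L\cong L'$ in line~11 are, up to isometry, exactly the primitive sublattices $M\subseteq L$ of corank $1$ and signature $(1,n-2)$ with $2\disc(M)\le\disc(L)$. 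One inclusion is the observation of \Cref{rem.explain_sublattice_test}: for such an $M$ put $\langle v\rangle = M^\perp$, $k\ZZ = v.L$, $kg = |v^2|$; by \Cref{lem:easy_vectors} we have $k\mid\disc(L)$ and $2g\le k$, the lattice $L$ is a primitive extension of $M\oplus[-gk]$ of index $g$, and composing with a gluing $L\oplus L(-1)\subseteq\Lambda\cong U^{n}$ realises $M$ as the orthogonal complement in $\Lambda$ of a primitive extension $[-gk]\oplus L(-1)\subseteq G$ of index $k$. For the reverse inclusion I would run this construction backwards: when line~11 succeeds, $M$ sits in $\Lambda$ orthogonally to the copy of $L(-1)\subseteq G$, hence $M\subseteq L(-1)^\perp = L'\cong L$ is primitive of corank $1$, with $|\disc(M)| = |\disc(G)| = g\disc(L)/k\le\disc(L)/2$ because $2g\le k$; its signature is $(1,n-2)$ by construction. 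This step is routine Nikulin gluing and is essentially contained in \Cref{rem.explain_sublattice_test}.

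There are two lines at which \textbf{false} is returned. Line~\ref{line:two-test} is reached only when $n=3$ and the list $\mathcal{M}$ has accumulated two non-isometric lattices; by the previous paragraph these are non-isometric primitive rank-$2$ sublattices $M_1,M_2$ of $L$ with $2\disc(M_i)\le\disc(L)$, and the guard in line~\ref{line:M-represents-0-2} guarantees that neither represents $0$ or $-2$. Thus \Cref{prop:rank2sublattice} applies verbatim and shows that $\Aut(\mathcal{D}_L)$ is not elementary, hence not virtually abelian. Line~\ref{line:not finite or hyperbolic} cannot be reached when $n=3$, since a hyperbolic lattice of rank $2$ always has finite or elementary symmetry group of hyperbolic type; so there $n=4$, $\rk M = 3$, and the condition that $\Aut(\mathcal{D}_M)$ is neither finite nor elementary of hyperbolic type means, by the Tits alternative and the trichotomy of elementary groups recalled in the introduction, that $\Aut(\mathcal{D}_M)$ is either non-elementary or elementary of parabolic type. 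Since $2\disc(M)\le\disc(L)$, \Cref{prop:easy_fundamental_chamber} yields a subgroup of $\Aut(\mathcal{D}_L)$ commensurable with $\Aut(\mathcal{D}_M)$. If $\Aut(\mathcal{D}_M)$ is non-elementary it is not virtually abelian, hence neither is that commensurable subgroup nor $\Aut(\mathcal{D}_L)$, and we are done.

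The remaining case, in which $\Aut(\mathcal{D}_M)$ is elementary of parabolic type, is the step I expect to require the most care. Here \Cref{prop:easy_fundamental_chamber} gives an infinite subgroup of $\Aut(\mathcal{D}_L)$ commensurable with $\Aut(\mathcal{D}_M)$, and it contains an isometry of infinite order and zero entropy, since a parabolic isometry of $\HH_M$ stays parabolic on $\HH_L$ when $M^\perp$ is negative definite of rank $1$; consequently $\Aut(\mathcal{D}_L)$ is neither finite nor elementary of hyperbolic type. This by itself does not yet exclude that $\Aut(\mathcal{D}_L)$ is elementary of parabolic type, which would still be virtually abelian, so a further argument is needed: I would use that the algorithm is only run on lattices that are not themselves of parabolic type — these having been classified already in \cite{brandhorst.mezzedimi} — so that $\Aut(\mathcal{D}_L)$ is forced to be non-elementary; alternatively one checks directly on that finite list that no rank-$4$ lattice of parabolic type admits a primitive corank-$1$ sublattice of parabolic type satisfying $2\disc(M)\le\disc(L)$. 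Granting this, the two previous paragraphs establish the lemma.
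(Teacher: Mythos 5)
Your proof is correct and follows the same skeleton as the paper's: \Cref{rem.explain_sublattice_test} and \Cref{lem:easy_vectors} identify each surviving $M$ as a primitive corank-$1$ sublattice with $2\disc(M)\le\disc(L)$, \Cref{prop:easy_fundamental_chamber} handles line~\ref{line:not finite or hyperbolic}, and \Cref{prop:rank2sublattice} handles line~\ref{line:two-test}. Two differences are worth noting. First, your opening paragraph proves a two-sided enumeration claim (that the loops hit \emph{every} such $M$ up to isometry); the paper explicitly declines to do this, remarking that completeness of the enumeration is ``strictly speaking not relevant for the correctness'' --- only the soundness of a \textbf{false} output is guaranteed, and a \textbf{true} output promises nothing. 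Second, and more substantively, you take the \textbf{ENSURE} clause literally (``not virtually abelian'') and correctly observe that when $n=4$ and $\Aut(\mathcal{D}_M)$ is elementary of parabolic type, the commensurability argument only yields that $\Aut(\mathcal{D}_L)$ is neither finite nor elementary of hyperbolic type, which does not by itself rule out parabolic type; your proposed patch (invoking the separate classification of parabolic-type lattices from \cite{brandhorst.mezzedimi}) is a reasonable way to close this. The paper instead silently proves only the weaker guarantee --- ``if $\Aut(M)$ is not finite or elementary of hyperbolic type, then neither is $\Aut(L)$'' --- which is exactly what \Cref{alg:classification} consumes, so the discrepancy is one of wording in the \textbf{ENSURE} clause rather than a mathematical gap; your version buys a literal reading of the specification at the cost of an appeal to an external classification.
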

\begin{proof}
The iteration over $k \mid \disc(L)$ and $g$ is justified by \Cref{lem:easy_vectors} (although strictly speaking not relevant for the correctness of the algorithm).
By construction (cf. \Cref{rem.explain_sublattice_test}), we have that $M$ embeds primitively in $L$, with orthogonal complement $M^\perp \cong [-gk]$. By  
\Cref{lem:easy_vectors},
\[2 \disc(M) \leq \disc(L).\]
By \Cref{prop:easy_fundamental_chamber} the fundamental chamber $\D_M$ of $M$ is contained in some fundamental chamber $\D_L$ of $L$. Then $\Aut(M)\times \{\id_{M^\perp}\}$ is commensurable with $\Aut(L)$. Thus if $\Aut(M)$ is not finite or elementary of hyperbolic type, then neither is $\Aut(L)$.
Thus the return value in line \ref{line:not finite or hyperbolic} is correct. 

After passing line \ref{line:M-represents-0-2}, $M$ does not represent $0,-2$ and therefore $\Aut(\mathcal{D}_M)$ is elementary of hyperbolic type. Then line \ref{line:two-test} is justified by \Cref{prop:rank2sublattice}.
\end{proof}

\begin{remark}
For $n=2$ the symmetry group is always finite or elementary of hyperbolic type. 
The oracle for $n>3$ can be obtained by induction as a finite list of lattices. 
The computation of $E$ is the most expensive step. It suffices to compute it up to the action of $\Or([-gk])\times \Or(L^\vee/L(-1))$. This may or may not be faster than enumerating the primitive extensions straight away. 
\end{remark}

The third test, called the \emph{cusp test}, checks whether the lattice $L$ admits primitive isotropic vectors with infinite stabilizer. If this is the case, then the symmetry group of $L$ is not elementary of hyperbolic type by \Cref{lem:detect_virtually_abelian}. Recall that a primitive isotropic vector $v\in L$ has infinite stabilizer if and only if the lattice $M=v^\perp/\langle v \rangle$ is not a root overlattice, i.e. $\rk(M_{root}) < \rk(M)$ (cf. \cite[Proposition~3.2]{brandhorst.mezzedimi}). 

\begingroup
\captionof{algorithm}{The cusp test}\label{alg:cusp test}
\endgroup
\begin{algorithmic}[1]
\REQUIRE $L$ a hyperbolic lattice of rank $n$.
\ENSURE If the output is "false", then $\Aut(L)$ is neither finite nor elementary of hyperbolic type.
\IF{$L$ is anisotropic}
\RETURN true
\ENDIF
\STATE compute a (random) isotropic vector $v \in L$ and set $M=v^\perp/[v]$
\STATE Let $R$ be the root sublattice of $M$.
\IF{$\rk R < n -2$}
\RETURN false 
\ENDIF 
\RETURN true
\end{algorithmic}\hrulefill\\

\section{The classification} \label{sec:classification}
We are now ready to proceed with the classification of hyperbolic lattices with elementary symmetry group of hyperbolic type.
We denote the isometry class of a lattice $L$ by $[L]$.
\begingroup
\captionof{algorithm}{Classification algorithm}\label{alg:classification}
\endgroup
\begin{algorithmic}[1]
\REQUIRE An integer $n=3,4$ and an oracle to determine if $\Aut(M)$ is finite or elementary of hyperbolic type for any hyperbolic lattice $M$ of rank $n-1$.
\ENSURE A complete list of representatives of the isomorphism classes of lattices of rank $n$ with finite or elementary symmetry group of hyperbolic type.
\IF{$n=3$}  
\STATE Let $L_{a,b,c}$ denote the lattice with Gram matrix 
\[G_{a,b,c}=\begin{pmatrix}
    -2 &a &b\\
    a  &-2 &c\\
    b  &c & -2
\end{pmatrix}\]
$H_1 \gets \{ [L_{a,b,c}] : -1 \leq a \leq 2, -1 \leq b \leq c \leq 18, L_{a,b,c} \text{ is hyperbolic} \}.$ \label{alg: def H1 rk 3}
\ENDIF
\IF{$n=4$}
\STATE Let $L_{a,b,c,d,e,f}$ denote the lattice with Gram matrix 
\[G_{a,b,c,d,e,f}=\begin{pmatrix}
    -2 & a &  b &  c\\
    a  &-2 &  d &  e\\
    b  & d & -2 &  f\\
    c  & e &  f & -2\\
\end{pmatrix}\]
\STATE $H_1 \gets \{ [L_{a,b,c,d,e,f}] \text{ hyperbolic}: \  -1 \leq a \leq 2, \ a \leq b,c,d,e,f \leq 18, \ b\leq c \leq d, \ b\leq e \}$ \label{alg: def H1 rk 4}
\ENDIF
\STATE $M_1 \gets \{[M] \mid L \subseteq M \text{ is a maximal even overlattice, } [L] \in H_1\}$ \label{alg:def M1}
\STATE $M_2 \gets \{[M] \in M_1 : M \text{ passes the cusp and sublattice tests} \}$ \label{alg:def M2}
\STATE $M_3 \gets [\;]$ 
\FOR{$[M] \in M_2$}
  \STATE Compute $\Aut(M)$ using Borcherds' method 
  \IF{$\Aut(M)$ is finite or elementary of hyperbolic type }
    \STATE append $M$ to $M_3$ 
  \ENDIF
\ENDFOR
\STATE $H_2 \gets \{[L_1] : L \subseteq L_1 \subseteq M$ where $[L] \in H_1$ and $[M] \in M_3\}$ \label{alg:def H2}
\STATE sort $H_2$ by discriminant in ascending order and remove all maximal lattices from it. \label{alg:sort H2}
\STATE $H_3 \gets M_3$ \label{alg:def H3}
\FOR{$L \in H_2$} \label{alg:loop H2}
\FOR{$p \mid \disc(L)$ with $p^2 \mid \disc(L)$} \label{alg:loop indexp}
\STATE compute an even overlattice $L \subseteq L'$ of index $p$ if it exists, otherwise continue 
the loop in line \ref{alg:loop indexp} with the next prime $p$ 
\IF{$[L'] \notin H_3$}\label{alg: overlattice test}
\STATE continue the loop in line \ref{alg:loop H2} with the next lattice $L$. 
\ENDIF 
\ENDFOR
\IF{$L$ does not pass the cusp or sublattice tests}
\STATE continue the loop in line \ref{alg:loop H2} with the next lattice $L$ 
\ENDIF
\STATE Compute $\Aut(L)$ using Borcherds' method 
  \IF{$\Aut(L)$ is finite or elementary of hyperbolic type }
    \STATE append $L$ to $H_3$ 
  \ENDIF
\ENDFOR
\end{algorithmic}\hrulefill\\
\begin{lemma}
\Cref{alg:classification} is correct.
\end{lemma}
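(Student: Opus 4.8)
The plan is to prove that the list $H_3$ returned by \Cref{alg:classification} coincides, without repetitions, with the set of isometry classes of even hyperbolic lattices of rank $n$ whose symmetry group is finite or elementary of hyperbolic type; that is, to establish \emph{soundness} (every class in $H_3$ has the property), \emph{completeness} (every such class is in $H_3$), and that no class is listed twice. Soundness and non-repetition are the easy parts. A class enters $H_3$, either as a member of $M_3$ or via the loop of line~\ref{alg:loop H2}, only after Borcherds' method \cite{shimada.borcherds} has computed $\Aut(\D_L)=\Or^+(L)/W(L)$ --- returned as a finitely generated group acting on $\HH_L$, whence one decides whether it is finite, elementary of hyperbolic type, or neither --- and found it finite or elementary of hyperbolic type. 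Moreover $M_3$ is built by looping over the \emph{set} $M_2$, while the loop of line~\ref{alg:loop H2} runs over the set $H_2$ after its maximal even lattices are deleted, so a lattice appended in that loop, not being a maximal even lattice, is isometric neither to a member of $M_3$ (all of which lie in $M_1$, a set of maximal even overlattices) nor to a lattice appended earlier, the isometry tests being those of \Cref{appendix}. Since $H_1$ is finite and each lattice has only finitely many even overlattices, $M_1,M_2,M_3,H_2,H_3$ are all finite; granting that Borcherds' method terminates on the inputs it receives, so does the algorithm.

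For completeness, let $L$ be an even hyperbolic lattice of rank $n$ with finite or elementary-of-hyperbolic-type symmetry group. By \Cref{lem:detect_virtually_abelian}(3), $L$ has a finite-index sublattice $L_0$ spanned by $(-2)$-roots with pairwise products in $\{-2,\dots,18\}$; such an $L_0$ is even, and after reordering the roots, changing signs, and reducing the root basis one may assume $L_0$ has Gram matrix $G_{a,b,c}$ if $n=3$, resp.\ $G_{a,b,c,d,e,f}$ if $n=4$, with parameters in the range of line~\ref{alg: def H1 rk 3}, resp.\ line~\ref{alg: def H1 rk 4}, so that $[L_0]\in H_1$. Let $M$ be a maximal even overlattice of $L$; as $L_0\subseteq L\subseteq M$, it is a maximal even overlattice of $L_0$ as well, hence $[M]\in M_1$. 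Applying \Cref{lem:overlattice} to $L\subseteq M$ (and using the trivial fact that an overlattice of a lattice with finite symmetry group has finite symmetry group), $\Aut(\D_M)$ is finite or elementary of hyperbolic type, hence virtually abelian; therefore $M$ passes the cusp test (\Cref{alg:cusp test}) and the sublattice test (\Cref{alg:sublattice}), which return ``false'' only for lattices whose symmetry group is, respectively, neither finite nor elementary of hyperbolic type, or not virtually abelian --- so $[M]\in M_2$, and Borcherds' method then confirms the type, giving $[M]\in M_3$. Thus $L_0\subseteq L\subseteq M$ with $[L_0]\in H_1$ and $[M]\in M_3$, and therefore $[L]\in H_2$.

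It remains to trace $[L]$ through the loop of line~\ref{alg:loop H2}, which I would do by strong induction on $\disc(L)$ over the classes in $H_2$ whose symmetry group is finite or elementary of hyperbolic type. If $L$ is a maximal even lattice then $L\cong M\in M_3\subseteq H_3$. Otherwise $[L]$ survives the deletion in line~\ref{alg:sort H2} and is visited in the loop. Consider the overlattice test (line~\ref{alg: overlattice test}): let $p^2\mid\disc(L)$ and let $L'$ be the even index-$p$ overlattice of $L$ produced by the algorithm. By \Cref{lem:overlattice} --- its second assertion if $\Aut(\D_L)$ is elementary of hyperbolic type, and trivially if $\Aut(\D_L)$ is finite --- the symmetry group of $L'$ is again finite or elementary of hyperbolic type; since moreover $L'\supseteq L_0$, the argument of the previous paragraph applied with $L'$ in place of $L$ shows that a maximal even overlattice of $L'$ lies in $M_3$, so $[L']\in H_2$. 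As $\disc(L')=\disc(L)/p^2<\disc(L)$, line~\ref{alg:sort H2} guarantees that $L'$ was treated before $L$, so by the inductive hypothesis $[L']\in H_3$ and the overlattice test is passed. The cusp and sublattice tests are passed by $L$ exactly as they were by $M$, Borcherds' method then confirms the type, and $L$ is appended to $H_3$. This closes the induction, proving completeness, and with the preceding paragraphs the correctness of \Cref{alg:classification}.

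The step I expect to be the main obstacle is the reduction, inside the completeness argument, of an arbitrary finite-index root sublattice of $L$ to the normalized Gram form underlying $H_1$ --- in particular, the claim that the root basis can be chosen so that the smallest off-diagonal entry lies in $\{-1,0,1,2\}$, which does not follow formally from the bound ``$\le 18$'' of \Cref{lem:detect_virtually_abelian}(3) and must be extracted from a genuine reduction of the roots (equivalently: from a direct proof that $H_1$ already lists a finite-index root sublattice of every relevant $L$). A secondary delicate point is the interaction of the discriminant sorting of $H_2$ with the overlattice test --- one must be sure the index-$p$ overlattice inspected there always lies in $H_2$, so that it is processed before $L$ --- which the bookkeeping of the previous two paragraphs is arranged to guarantee.
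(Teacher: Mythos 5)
Your proof is correct and follows essentially the same route as the paper: reduce to a normalized root basis so that $[L_0]\in H_1$, pass to a maximal even overlattice to get $[L]\in H_2$, and induct on the discriminant (using the sorting of $H_2$) to justify the overlattice test. The one step you flag as the main obstacle --- forcing the distinguished off-diagonal entry into $\{-1,0,1,2\}$ --- is resolved in the paper exactly as you anticipate, namely by inspecting the proof of Nikulin's Lemma~1 rather than just its statement.
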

\begin{proof}
Let $L$ be a lattice with finite or elementary symmetry group of hyperbolic type. 
If $n = \rk L =3$, by \cite[Lemma 1]{nikulin.interesting} there are $3$ roots $\delta_1,\delta_2,\delta_3 \in L$ which are linearly independent and such that \[-2 < \delta_i.\delta_j \leq 18.\] 
By inspection of the proof of this lemma, we can moreover assume that $a = \delta_1.\delta_2 \in \{-1,0,1,2\}$. Denote the Gram matrix of $(\delta_1,\delta_2,\delta_3)$ by $G_{a,b,c}$
with $-1 \leq a\leq 2$ and $-1\leq b,c \leq 18$. After possibly swapping $\delta_1$ and $\delta_2$, we may assume that $b \leq c$. Let $L_{a,b,c}$ denote the lattice with Gram matrix $G_{a,b,c}$. 

If $n= \rk L =4$, by \cite[Lemma 1]{nikulin.interesting} there are $4$ roots and by inspection of the proof we can once more assume that $-1 \leq \delta_i.\delta_j \leq 2$ for at least one pair $i,j$ 
After permuting $\delta_1,\dots,\delta_4$, we may assume that $a = \min(a,b,c,d,e,f)$. 
Then we may still exchange $\delta_1$ and $\delta_2$ as well as $\delta_3,\delta_4$ and achieve that 
$b \leq c,d,e$ as well as $c \leq d$.
By construction, the set $H_1$ contains a sublattice of $L$ of finite index.

In line \ref{alg:def H2}, $M_3$ is, by construction, the set of isometry classes of maximal even lattices with finite or elementary symmetry group of hyperbolic type. 
Therefore, $[L]$ is contained in $H_2$.

By \Cref{lem:overlattice} every overlattice $L'$ of $L$ has finite or elementary symmetry group of hyperbolic type. 
By induction and since $H_2$ is sorted by discriminant, the isometry class $[L']$ is already contained in $H_3$. Therefore, $L$ passes the overlattice test in line \ref{alg: overlattice test}.

The rest of the algorithm is clear. 
\end{proof}

\begin{remark}
A crucial part of this algorithm is to test lattices for isometry in order to create sets of isometry classes. See the appendix for an algorithm to do this. Testing the $n(n-1)/2$-pairs of a list of length $n$ for isometry is obviously too expensive if $n$ is in the range of millions. Therefore, we implemented a hash function for an isometry class that captures some invariants. The genus symbol \cite[Chapter 15]{conway.sloane.lattices} is a good invariant for this purpose. At the odd primes it is canonical and at the even ones, we simply take the ranks, scales and parity of the Jordan constituents, which is an invariant as well.  
\end{remark}

We obtain the following results from running \Cref{alg:classification}. We denote by $U(n,k)$ the rank $2$ hyperbolic lattice with Gram matrix
$$\begin{pmatrix}
    0 &n\\
    n &2k
\end{pmatrix}$$
and $U(n)\coloneqq U(n,0)$.

\begin{theorem} \label{thm:classification.3}
    Let $L$ be an even hyperbolic lattice of rank $3$ whose symmetry group is elementary of hyperbolic type. Then $L$ is isometric to exactly one of the following $45$ lattices
    \[[2n] \oplus A_2 \qquad n \in \{5, 8, 10, 11, 14, 15, 20, 24, 26, 30, 42, 48, 60, 90\}\]
    
    \[[2n]\oplus A_1 \oplus A_1 \qquad n \in \{6, 7, 12, 15, 24, 36\}\]
    
    \[\begin{pmatrix}
        2l & 1 & 0\\
        1 & -2 & 1\\
        0 & 1 & -2\\
    \end{pmatrix} \qquad l\in \{11, 13, 15, 21, 23, 33\}\]
    
     \[\begin{pmatrix}
        2l & 1 & 0\\
        1 & -2 & 0\\
        0 & 0 & -2\\
    \end{pmatrix} \qquad l\in \{5,12\}\] 
     
     \[\begin{pmatrix}
        2l & 1 & 1\\
        1 & -2 & 0\\
        1 & 0 & -2\\
    \end{pmatrix} \qquad l\in \{3,7\}\] 
    \[U(n) \oplus [-2] \qquad n \in \{11,14,15,20,24\}\]

    \[U(n,k)\oplus [-2] \qquad (n,k) \in \{(8,2),(8,3),(8,6),(9,3),(12,3),(12,6)(12,8),(16,8)\}\]

    \[\begin{pmatrix}
        -2&3&3 \\
        3& -2 &2 \\
        3&  2& -2 
    \end{pmatrix},\begin{pmatrix}
        -2&3&7 \\
        3& -2 &2 \\
        7&  2& -2 
    \end{pmatrix}\]
\end{theorem}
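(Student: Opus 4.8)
The plan is to obtain this classification by running \Cref{alg:classification} with $n=3$, the oracle being the classical description of symmetry groups of hyperbolic lattices of rank $2$ (finite exactly when the lattice represents neither $0$ nor $-2$, and elementary of hyperbolic type otherwise, cf. \cite{galluzzi.lombardo.peters}). I then need to argue that the list $H_3$ produced by the algorithm is complete, and to extract from it the sublist of lattices whose symmetry group is elementary of hyperbolic type rather than finite.

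First I would pin down the ``top'' of the search. By \Cref{lem:detect_virtually_abelian}(3) --- Nikulin's bounded-root lemma --- a lattice $L$ with $\Aut(\D_L)$ elementary of hyperbolic type contains a finite-index sublattice spanned by three $(-2)$-roots $\delta_1,\delta_2,\delta_3$ with $-2<\delta_i.\delta_j\le 18$, and by inspecting the proof of \cite[Lemma~1]{nikulin.interesting} one may in addition assume $\delta_1.\delta_2\in\{-1,0,1,2\}$ and, after swapping $\delta_1$ and $\delta_2$, $\delta_1.\delta_3\le\delta_2.\delta_3$. Hence $L$ is an even overlattice of one of the finitely many lattices $L_{a,b,c}$ comprising $H_1$ (line~\ref{alg: def H1 rk 3}). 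By \Cref{lem:overlattice} every even overlattice of such an $L$, in particular a maximal even overlattice $M\supseteq L$, still has finite or elementary hyperbolic symmetry group; and since $L_{a,b,c}\subseteq L\subseteq M$, this $M$ is also a maximal even overlattice of $L_{a,b,c}$. So the finite set $M_1$ of all such maximal overlattices contains a superlattice of every $L$ we are after.

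Next I would prune $M_1$. Any $M$ failing the cusp test (\Cref{alg:cusp test}) has a primitive isotropic vector with infinite stabilizer, so by \Cref{lem:detect_virtually_abelian}(2) its symmetry group is neither finite nor elementary of hyperbolic type; any $M$ failing the sublattice test (\Cref{alg:sublattice}) is excluded via \Cref{prop:rank2sublattice} and \Cref{prop:easy_fundamental_chamber}. For each of the few survivors $M\in M_2$ I would run the OSCAR \cite{OSCAR} implementation of Borcherds' method \cite{shimada.borcherds} to compute generators of $\Aut(\D_M)=\Or^+(M)/W(M)$ together with their action on $\HH_M$, and then decide by a finite group computation whether this group is finite, elementary of hyperbolic type, or neither; retaining the first two cases gives $M_3$, the set of maximal even rank-$3$ lattices with finite or elementary hyperbolic symmetry group. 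Finally I would descend: every sought $L$ sits in a chain $L_{a,b,c}\subseteq L\subseteq M$ with $[L_{a,b,c}]\in H_1$ and $[M]\in M_3$, hence lies in the finite set $H_2$ of all intermediate even lattices. Processing $H_2$ by increasing discriminant, I would apply the overlattice test (every even overlattice of $L$ of prime index $p$ with $p^2\mid\disc L$ must already be in the list built so far --- legitimate by \Cref{lem:overlattice} and induction on the discriminant), then the cusp and sublattice tests, and only on the remaining candidates run Borcherds' method. The output $H_3$ is the complete list of even rank-$3$ lattices with finite or elementary hyperbolic symmetry group; discarding the ones with finite symmetry group (which recover the rank-$3$ part of the Nikulin--Vinberg list) leaves the $45$ lattices of the statement, and I would verify their pairwise non-isometry with the algorithm of \Cref{appendix}, using the genus-symbol hash to cut down the number of comparisons.

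The hard part is not conceptual but computational, and it has two faces. First, $H_1$ already contains on the order of $10^4$ isometry classes (and the analogue in \Cref{thm:classification.4} is far larger), so the three tests of \Cref{sec:tests} are indispensable: they must eliminate all but a handful of candidates, since running Borcherds' method is by far the costliest step. Second, Borcherds' method is not a priori guaranteed to terminate on an arbitrary hyperbolic lattice, so I would have to check that it halts on every candidate fed to it --- which it does here, because after the cusp and sublattice tests the surviving lattices are small and $2$-reflective of hyperbolic type, a regime in which termination holds. A subsidiary point to be careful about is the bookkeeping of primitive extensions and of intermediate lattices, together with the reliable isometry testing of a very large number of Gram matrices, which is precisely what the weak-approximation algorithm of the appendix is designed for.
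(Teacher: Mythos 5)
Your proposal is correct and follows essentially the same route as the paper: Theorem~\ref{thm:classification.3} is obtained there precisely by running \Cref{alg:classification} for $n=3$ (whose correctness is justified by \Cref{lem:overlattice}, \Cref{prop:easy_fundamental_chamber}, \Cref{prop:rank2sublattice} and Nikulin's bounded-root lemma, exactly as you describe), pruning with the overlattice, cusp and sublattice tests, finishing with Borcherds' method, and then discarding the $26$ lattices with finite symmetry group. Your additional remarks on termination of Borcherds' method and on isometry testing via the appendix match the paper's own discussion in the remarks following the theorem.
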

\begin{remark}
Let $n=3$. The set $H_1$ consists of $403$ isometry classes.
The sets $M_1,M_2,M_3$ consist of $192,24,21$ isometry classes.
The set $H_2$ consists of $225$ isometry classes. 
Of these, $59$ are discarded by the overlattice test in Line \ref{alg: overlattice test}, $15$ by the cusp test, $62$ by the sublattice test and, finally, $18$ by Borcherds' method.
The resulting set $H_3$ has $71$ elements. Of these $26$ have finite symmetry group, confirming Nikulin's result \cite{nikulin.finite.aut.3}, and $45$ have elementary symmetry group of hyperbolic type. See \cite{intermediate.data} for the code and intermediate results.
\end{remark}

\begin{theorem} \label{thm:classification.4}
Let $L$ be a hyperbolic lattice of rank $4$ with elementary symmetry group of hyperbolic type. 
Then $L$ is isometric to exactly one of the following $3$ lattices
\[U(6) \oplus A_1 \oplus A_1, 
\begin{pmatrix} 
0 & 4 & 0 & 0\\
 4 & -6 & 1 & 1\\
 0 & 1 & -2 & 0\\
 0 & 1 & 0 & -2
\end{pmatrix},
\begin{pmatrix} 
0 & 5 & 0 & 0\\
 5 & -6 & 2 & 2\\
 0 & 2 & -2 & 1\\
 0 & 2 & 1 & -2
\end{pmatrix}
\]
of discriminants $2^4 \cdot 3^2$, $2^6$ and $3 \cdot 5^2$.
\end{theorem}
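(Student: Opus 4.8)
The plan is to prove the theorem by running \Cref{alg:classification} with $n=4$, feeding it as its oracle the rank $3$ classification of \Cref{thm:classification.3} (together with the finite-symmetry-group case in rank $3$, which the same run reproduces and which matches Nikulin's list). Since the algorithm has already been proved correct, what remains is to carry out the computation and then read off from the output set $H_3$ which lattices have symmetry group elementary of hyperbolic type rather than finite.

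Concretely, I would proceed exactly along the steps of the algorithm. First build the finite set $H_1$: by \cite[Lemma~1]{nikulin.interesting} any lattice $L$ with finite or elementary symmetry group of hyperbolic type contains a finite-index sublattice spanned by four $(-2)$-roots $\delta_1,\dots,\delta_4$ with pairwise products in $(-2,18]$, and, inspecting the proof of that lemma, one may normalise by permutations and sign changes of the $\delta_i$ so that the Gram matrix lies in the explicit box $G_{a,b,c,d,e,f}$ with $-1\le a\le 2$, $a\le b,c,d,e,f\le 18$, $b\le c\le d$, $b\le e$; enumerating these matrices, discarding the non-hyperbolic ones, and reducing modulo isometry with the test of \Cref{appendix} gives $H_1$. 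Then compute the maximal even overlattices $M_1$, discard those failing the cusp test (\Cref{alg:cusp test}) or the sublattice test (\Cref{alg:sublattice})---the latter invoking the rank $3$ oracle on primitive corank-$1$ sublattices $M$ with $2\disc(M)\le\disc(L)$---to obtain $M_2$, and run the OSCAR implementation of Borcherds' method on each lattice in $M_2$, keeping those whose symmetry group is finite or elementary of hyperbolic type; this produces $M_3$, the maximal even lattices with that property. Finally descend: form $H_2$ as the set of lattices sandwiched between an $H_1$-lattice and an $M_3$-lattice, sort by discriminant, and walk down prime by prime, applying at each step the overlattice test (\Cref{lem:overlattice}), the cusp and sublattice tests, and Borcherds' method, to assemble $H_3$. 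Separating $H_3$ into the lattices with finite symmetry group (which reproduce Vinberg's rank $4$ list) and those with symmetry group elementary of hyperbolic type leaves exactly the three lattices in the statement; their discriminants $2^4\cdot 3^2$, $2^6$, $3\cdot 5^2$ are then immediate, and, having pairwise distinct discriminant groups, the three lattices are pairwise non-isometric.

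The main obstacle is computational rather than conceptual. On the one hand, one must ensure that Borcherds' method, in Shimada's formulation, terminates and outputs a correct description of $\Aut(\D_L)$ for every lattice in $M_2$ and in the much larger family of candidates produced from $H_2$; termination in each individual case is witnessed by the computation itself. On the other hand, one must test an enormous number of Gram matrices for isometry so that $H_1$, the $M_i$ and $H_3$ are honestly sets of isometry classes: this is handled by the weak-approximation isometry test of \Cref{appendix}, together with a genus-symbol hash to avoid quadratically many pairwise comparisons. A last subtlety is to distinguish, for a lattice passing all tests, an elementary symmetry group of hyperbolic type from a finite one (and, a priori in rank $4$, from one of parabolic type): the cusp test already rules out primitive isotropic vectors with infinite stabiliser, which by \Cref{lem:detect_virtually_abelian}(2) is necessary for hyperbolic type, and the group output by Borcherds' method then decides which of the remaining cases occurs.
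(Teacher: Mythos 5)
Your proposal is correct and follows essentially the same route as the paper: the theorem is obtained by running \Cref{alg:classification} with $n=4$ (whose correctness is established separately), using the rank $3$ classification as the oracle, and reading off from $H_3$ the lattices whose symmetry group is elementary of hyperbolic type rather than finite. The paper's own justification is exactly this computation, documented in the remark following the theorem, so nothing further is needed.
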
 
\begin{remark}
Let $n=4$. 
The set $H_1$ consists of $62019$ isometry classes coming from $1,533,532$ Gram matrices.  
The set $M_1$ consist of $25278$ isometry classes. After discarding $23313$ with the cusp test and $1957$ with the sublattice test, $|M_2|=8$ remain. Of these, $|M_3|=4$ pass Borcherds' method. 

The set $H_2$ consists of $2568$ isometry classes,
of these $217$, $85$ and $21$ pass the overlattice, cusp and then sublattice test. After applying Borcherds' method, $|H_3|=17$ remain. Of these, $14$ have finite symmetry group, as in \cite{vinberg.finite.aut.4}. See \cite{intermediate.data} for the code and intermediate results.
\end{remark}

\begin{remark}
We obtain a list of $48$ lattices with elementary hyperbolic symmetry group in rank $3$ and $4$. Clearly, all $3$ lattices in rank $4$ are isotropic. On the other hand, observe that one of the lattices $L$ of rank $3$ in \Cref{thm:classification.3} is isotropic if and only if $2\disc(L)$ is a square. Indeed, if $v\in L$ is isotropic, then $v^\perp/\langle v \rangle \cong A_1$ by \Cref{lem:detect_virtually_abelian}, so $\disc(L) = 2n^2$, where $n=v.L$ is the divisibility of $v$. Conversely, if $r\in L$ is a $(-2)$-root, then $\disc(r^\perp)$ is a square and thus it is represents zero. A simple computation shows that $20$ out of the $45$ lattices of rank $3$ in \Cref{thm:classification.3} are isotropic.
\end{remark}

\appendix

\section{An algorithm for weak approximation and an isometry test}\label{appendix}
In this section we give an algorithm to determine if two non-degenerate indefinite integer lattices of rank at
least $3$ are isometric. \\ 

The difficulty is the following: in order to decide if two lattices $L_1$ and $L_2$ in the same genus are in the same spinor genus (and therefore isometric), one has to produce an isometry 
$f\colon L_1 \to L_2 \otimes \QQ$ such that $[L_2:f(L_1) \cap L_2]$ is coprime to $2\det(L_1)$.
\Cite{chan.gao.weak.approx} shows that finding such a proper isometry $f$ can be done by an exhaustive search.
Following \cite[101:7]{meara.quadratic}, our approach is based on splitting local isometries into reflections and approximating them globally.

This allows to effectively decide isometry of any two integer lattices:
The case of rank $1$ is trivial and rank $2$ is solved by Gauss' reduction of binary forms. The  definite case is checked with \cite{plesken.souvignier}. Finally, two degenerate lattices are isometric if and only if they have the same rank (as abelian groups) and $L/\ker(L) \cong L'/\ker(L')$.

The algorithms are implemented as part of the computer algebra system OSCAR \cite{OSCAR}.\\

Recall that, for $V$ a rational quadratic space and $x \in V$ with $x^2 \neq 0$, the reflection in $x$ is defined by 
$s_x(y) = y- \frac{2(x.y)}{x^2} x$.
For the estimates, we use the normalized $p$-adic absolute value with $|p|_p=1/p$ and the maximum norm $|| \cdot ||_p$ on endomorphisms, matrices and vectors. The normalized $p$-adic valuation is denoted by $\nu_p(x)$ and satisfies $\nu_p(p)=1$.
If confusion is unlikely, we drop $p$ from the indices.

We recall the following definition from \cite[Section 4]{brandhorst.veniani.hensel-lifting}:
\begin{definition}
Let \(f\colon L \to M\) be a linear map of \(\ZZ_p\)-lattices of the same rank. 
We say that \(f\) is \emph{compatible} if \(f(L \cap p^iL^\vee)=M \cap p^iM^\vee\) for all \(i \in \ZZ\).
For an integer \(a \geq 0\), we call a compatible linear map \(f\) \emph{\(a\)-approximate} if there exists an isometry \(\tilde f \colon L \to M\) such that \(\tilde f \equiv f \bmod p^a\mathrm{Hom}(L,M)\). 
\end{definition}
If $L$ and $M$ are \(\ZZ_p\)-lattices and 
$f\colon L \to M\otimes \QQ_p$ is a $\ZZ_p$-linear map, then we say that $f$ is $a$-approximate if $f\colon L \to f(L)$ is $a$-approximate. Conditions to be $a$-approximate in terms of matrices are spelled out in \cite[Definition 4.2]{brandhorst.veniani.hensel-lifting}.

\begingroup
\captionof{algorithm}{Weak approximation}\label{alg:weak approximation}
\endgroup
\hrule
\begin{algorithmic}[1]
\REQUIRE \hfill
\begin{itemize}
    \item A non-degenerate quadratic space $V=\QQ^n$; 
    \item a finite set of primes $P$;
\item a collection of natural numbers $(t_p)_{p \in P}$;
\item a collection of matrices $(f_p)_{p \in P}$ with $f_p\in \mathrm{GL}(V \otimes \QQ_p)$ 
such that $f_p$ is $(\nu_p(d_p)+t_p)$-approximate, where $d_p$ is the denominator of $f_p$ and $\nu_p(\det(f_p)-1)\geq 1-\nu_p(4)$.
\end{itemize}
\ENSURE Return a matrix $f \in \mathrm{SO}(V)$ with $||f - f_p||_p \leq p^{-t_p}$ for all $p \in P$.
\FOR{$p \in P$}\label{alg:weak approximation-lfirstloop}
  \STATE $s \gets t_p+\nu_p(d_p)$
  \WHILE{true} 
  \STATE Use the algorithm described in \cite[Lemma 5.8]{shimada.connected.components} to (try to) decompose $f_p$ as a series of $p$-adic reflections, i.e. compute $x_{1,p},\dots ,x_{r_p,p} \in \ZZ^n$ such that $f_p \equiv s_{1,p} \circ \dots \circ s_{r_p,p} \pmod{ p^{t_p}}$ where $s_{i,p}$ denotes the reflection in $x_{i,p}$. \label{alg:weak approximation-lshimada} If the decomposition was successful, break the while loop and continue with the for loop in Line \ref{alg:weak approximation-lfirstloop} with the next prime. 
  \STATE $s \gets 2s$
  \STATE Use quadratic Hensel lifting of \cite[Algorithm 3]{brandhorst.veniani.hensel-lifting} to compute a matrix $f_p' \in \QQ^{n \times n}$ such that $f_p'$ is $s$-approximate and $f_p \equiv f_p' \mod p^s f_p \ZZ^{n \times n}$.  
  \STATE $f_p \gets f_p'$
  \ENDWHILE
\ENDFOR
\STATE Pick some anisotropic vector $v \in V$.
\STATE For $r_p<i\leq r:=\max\{r_p : p \in P\}$ set $x_{i,p}=v$. \label{alg:weak-approximation.padding}
\STATE $(l_i \gets 2n+10 \colon$  for $i = 1,\dots n)$
\REPEAT
    \STATE For each $1 \leq i \leq r$ compute $x_i \in \ZZ^n$ with \[x_i \equiv x_{i,p} \pmod{p^{t_p+l_i}} \quad ( \forall p \in P).\]
    \vspace*{-4mm}
    \STATE $f\gets s_{x_1} \circ \dots \circ s_{x_n}$
    \FOR{ $p \in P$}
        \STATE $e_p \gets \log_p(||f - f_p||^{-1})$
        \IF{$e_p < t_p$}
        \STATE $l_i\gets l_i+10$
        \ENDIF
    \ENDFOR
\UNTIL{$e_p \geq t_p$ for all $p \in P$}
\RETURN $f$
\end{algorithmic}\hrulefill\\

\begin{proposition}
\Cref{alg:weak approximation} terminates and is correct. 
\end{proposition}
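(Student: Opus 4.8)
The plan is to treat the two phases of \Cref{alg:weak approximation} separately: the \emph{local phase}, i.e.\ the for-loop beginning in Line~\ref{alg:weak approximation-lfirstloop}, which writes each $f_p$ as a $p$-adic product of reflections to precision $p^{t_p}$, and the \emph{global phase}, i.e.\ the concluding \texttt{repeat}-loop, which glues the local reflection vectors by the Chinese Remainder Theorem and checks the resulting global isometry. In each phase I would isolate one quantitative $p$-adic estimate that drives both termination and correctness.

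First I would settle the local phase. Fix $p \in P$. The inner \texttt{while}-loop starts with $f_p$ being $(\nu_p(d_p)+t_p)$-approximate and, after each unsuccessful pass, replaces $f_p$ by a matrix that is $s$-approximate with $s$ doubled, using the quadratic Hensel lifting of \cite[Algorithm~3]{brandhorst.veniani.hensel-lifting}, whose hypotheses hold because $f_p$ is already approximate to a sufficient order. By \cite[Lemma~5.8]{shimada.connected.components} the decomposition $f_p \equiv s_{x_{1,p}} \circ \dots \circ s_{x_{r_p,p}} \pmod{p^{t_p}}$ of Line~\ref{alg:weak approximation-lshimada} succeeds once $f_p$ is $s$-approximate for $s$ larger than an explicit bound depending only on $n$, on $t_p$ and on the genus; since $s$ at least doubles each pass, this bound is reached after finitely many passes, so the \texttt{while}-loop terminates, and since $P$ is finite so does the local phase. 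I would also record here the bookkeeping that forces the final output into $\mathrm{SO}(V)$: the determinant hypothesis $\nu_p(\det(f_p)-1)\ge 1-\nu_p(4)$, together with the padding by reflections $s_v$ in Line~\ref{alg:weak-approximation.padding}, is what controls the parity of the number of reflections, so that $r := \max_p r_p$ and each $r - r_p$ are even and padding the $p$-th sequence to length $r$ does not disturb its product modulo $p^{t_p}$.

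Next I would analyse the global phase. For each slot $i$ the integer vector $x_i \in \ZZ^n$ produced by CRT satisfies $x_i \equiv x_{i,p} \pmod{p^{t_p+l_i}}$ for all $p \in P$; since each $x_{i,p}$ is anisotropic (being a Shimada reflection vector or the anisotropic vector $v$) and the $l_i$ are large, $x_i^2 \ne 0$ already $p$-adically, hence $x_i^2 \ne 0$, so every $s_{x_i}$ is a genuine reflection of $V$, $f = s_{x_1} \circ \dots \circ s_{x_r} \in \Or(V)$, and by the parity count $\det f = 1$, i.e.\ $f \in \mathrm{SO}(V)$. The key estimate is this: from $x_i \equiv x_{i,p} \pmod{p^{t_p+l_i}}$ and $s_x(y)=y-\tfrac{2(x.y)}{x^2}x$ (whose division by $x^2$ costs at most $\nu_p(x_{i,p}^2)+O(1)$ digits) one gets $s_{x_i} \equiv s_{x_{i,p}}$ to order $t_p+l_i-\nu_p(x_{i,p}^2)-O(1)$, and a product of $r$ reflections keeps a congruence of order tending to infinity with $\min_i l_i$; combining with $s_{x_{1,p}} \circ \dots \circ s_{x_{r,p}} \equiv f_p \pmod{p^{t_p}}$ yields $\|f-f_p\|_p \le p^{-t_p}$ as soon as every $l_i$ passes an explicit threshold $L^\ast$ depending on $n$, the $t_p$ and the $\nu_p(x_{i,p}^2)$. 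Since every iteration of the \texttt{repeat}-loop that fails the test $e_p \ge t_p$ increases all $l_i$ by $10$, the regime $l_i \ge L^\ast$ is reached in finitely many iterations and the loop exits. Correctness is then immediate: on exit $e_p \ge t_p$, i.e.\ $\|f-f_p\|_p \le p^{-t_p}$ for all $p \in P$, and $f \in \mathrm{SO}(V)$, which is precisely the required output.

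The main obstacle is the pair of $p$-adic stability estimates — the threshold in \cite[Lemma~5.8]{shimada.connected.components} beyond which the local reflection decomposition is guaranteed to succeed, and the error-propagation estimate through the reflection formula and through products of reflections in the global phase — since both must carefully track the precision lost to the $p$-divisibility of the norms $x_{i,p}^2$ and of the denominators $d_p$. The parity bookkeeping around the determinant hypothesis, in particular at $p = 2$, is a secondary delicate point.
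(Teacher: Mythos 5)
Your proof follows the same two-phase structure as the paper's: termination of the local while-loop via the doubling of $s$ and the eventual success of Shimada's reflection decomposition, the parity argument from the determinant hypothesis showing the padding in Line~\ref{alg:weak-approximation.padding} only composes with the identity, and convergence of the CRT-glued reflections to $f_p$ by continuity of $x \mapsto s_x$ and of matrix multiplication (the paper makes your quantitative error-propagation estimates explicit in the two lemmas following the proposition). This is essentially the paper's argument, with some extra but harmless bookkeeping on your part regarding anisotropy of the $x_i$ and membership in $\mathrm{SO}(V)$.
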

\begin{proof}
If $f_p$ is $a$-approximate and $a$ is sufficiently high, then the decomposition into reflections in Line \ref{alg:weak approximation-lshimada} is successful. 
Since $f_p$ is $a$-approximate, it is a valid input for the Hensel lifting algorithm.
From $s \geq t_p + \nu_p(d_p)$ we get $p^sf_p \ZZ^{n \times n} \subseteq p^{t_p}\ZZ^{n\times n}$. In particular, $||f_p-f_p'||_p \leq p^{-t_p}$.   

Thus, after the for loop in Line \ref{alg:weak approximation-lfirstloop}, $||f_p - s_{x_1,p} \circ \dots \circ s_{x_{r_p},p}||_p\leq p^{-t_p}$ holds for each $p \in P$.  
Since $\nu_p(\det(f_p)-1)\geq 1 + \nu_p(4)$, the number of reflections $r_p$ is even. Therefore, we only composed with the identity in Line \ref{alg:weak-approximation.padding} and 
\[||f_p - s_{x_1,p} \circ \dots \circ s_{x_{r},p}||\leq p^{-t_p} \quad (\forall p \in P)\]
holds. 

Once the $x_i$ approximate the $x_{i,p}$ sufficiently well, the $s_{x_i}$ approximate the $s_{x_i,p}$ because the dependence of $s_{x_i}$ on $x_i$ is continuous. Since matrix multiplication is continuous, the product of the $s_{x_i}$ eventually approximates $f_p$ and the algorithm terminates. 
\end{proof}

For the sake of completeness we give an apriori estimate for the precisions $l_i$ needed for the Chinese remainder. In practice, it will not be optimal. Hence, we included the while loop.

\begin{lemma}\label{lem:error}
Let $V=\QQ^n$ be a quadratic space with Gram matrix $G$ and $h,x \in V$.
Let
\[ c = ||2G|| \cdot  |x^2|^{-1} \cdot ||x|| \max \left\{ |x^2|^{-1} \cdot ||G||, 1\right\}\]
and 
\[d = \max\{1,  ||2 G x|| /|x^2|\}\]
If $||h|| < ||x||$, then 
$ || s_x - s_{x+h}|| \leq c \cdot ||h||$
and 
\(||s_x|| = ||s_{x+h}|| \leq d. \)
\end{lemma}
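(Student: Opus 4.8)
The plan is to prove \Cref{lem:error} by a direct but careful estimate using the explicit formula for the reflection $s_x(y) = y - \frac{2(x.y)}{x^2} x$, treated as an endomorphism of $V$. First I would write the reflection as a matrix: with respect to the standard basis and Gram matrix $G$, the endomorphism $s_x$ has matrix $I - \frac{2}{x^2} x (Gx)^T$ (so that $s_x(y) = y - \frac{2(x.y)}{x^2} x$). This makes both claimed bounds into estimates on a rank-one correction term. The quantity $\|s_x\|$ is then controlled by $\|I\|$ together with $\frac{2}{|x^2|}\|x\|\cdot\|Gx\|$; using $\|Gx\| \le \|G\|\cdot\|x\|$ and absorbing into the definition of $d$, one gets $\|s_x\| \le \max\{1, \|2Gx\|/|x^2|\} = d$ after checking that the max-norm of $I$ and of the rank-one piece combine as a max (not a sum) in the $p$-adic setting — this is exactly why $d$ is written with a $\max\{1,\cdot\}$. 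Note that the formula for $s_x$ depends on $x$ only through $x$ in the "output direction" and through $Gx$ and $x^2$ in the coefficient, and that $x^2 = (x+h)^2$ is \emph{not} automatic — so I should be slightly careful: the hypothesis $\|h\| < \|x\|$ ensures $\nu_p(x^2) = \nu_p((x+h)^2)$ when $x^2$ has the dominant valuation, which is what lets us treat $|x^2|^{-1}$ and $|(x+h)^2|^{-1}$ on equal footing.

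For the Lipschitz estimate I would subtract the two matrices:
\[
s_x - s_{x+h} = -\frac{2}{x^2} x (Gx)^T + \frac{2}{(x+h)^2}(x+h)(G(x+h))^T.
\]
I would split this difference into three telescoping pieces: (i) replacing $x$ by $x+h$ in the leftmost factor, contributing a term with $\|h\|$; (ii) replacing $Gx$ by $G(x+h) = Gx + Gh$ in the transpose factor, contributing $\|Gh\| \le \|G\|\cdot\|h\|$; and (iii) replacing the scalar $\frac{1}{x^2}$ by $\frac{1}{(x+h)^2}$, where $\frac{1}{x^2} - \frac{1}{(x+h)^2} = \frac{(x+h)^2 - x^2}{x^2 (x+h)^2} = \frac{2(x.h) + h^2}{x^2(x+h)^2}$, whose norm is bounded using $|2(x.h)+h^2| \le \|2G\|\cdot\|x\|\cdot\|h\|$ (the max-norm bound on a bilinear form, again using $\|h\| \le \|x\|$ to drop the $h^2$ term) divided by $|x^2|^2$ (since $|(x+h)^2| = |x^2|$). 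Collecting the three contributions and bounding each factor of $\|x\|$, $\|Gx\|$, $|x^2|^{-1}$ crudely, every term is $\le c\,\|h\|$ with $c$ the stated constant; the factor $\max\{|x^2|^{-1}\|G\|, 1\}$ absorbs the worst case among pieces (ii) and (iii) versus (i).

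The main obstacle, such as it is, is bookkeeping: one has to be disciplined about which max-norm submultiplicativity facts hold ($\|AB\| \le \|A\|\cdot\|B\|$ for the max norm on matrices, $\|Av\| \le \|A\|\cdot\|v\|$, $\|A+B\| \le \max\{\|A\|,\|B\|\}$ over $\QQ_p$) and to verify at the outset that $\|h\| < \|x\|$ really does force $|(x+h)^2| = |x^2|$ — this uses that $x^2$ appears with strictly smaller absolute value than any term involving $h$ only if $\nu_p(x^2) \le 2\nu_p(x)$, which holds because $x^2 = x^T G x$ and the diagonal-type bound gives $|x^2| \le \|G\|\cdot\|x\|^2$; more to the point, $(x+h)^2 - x^2 = 2(x.h) + h^2$ has absolute value $\le \max\{\|2G\|\|x\|\|h\|, \|G\|\|h\|^2\} < $ a quantity one must compare to $|x^2|$. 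Here there is a genuine subtlety: the lemma as stated does \emph{not} a priori guarantee $|x^2| > |(x+h)^2 - x^2|$, so strictly I should either add this as an implicit hypothesis (it is harmless in the application, where $x^2$ is a unit times a controlled power of $p$) or observe that the estimate $c\,\|h\|$ can be re-derived without that equality by carrying $|(x+h)^2|^{-1}$ through and noting it only makes $c$ larger by a bounded factor. I would flag this point and proceed with the clean case, since that is what the calling algorithm needs.
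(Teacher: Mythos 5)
Your proposal is correct and follows essentially the same route as the paper: expand $s_x - s_{x+h}$ into telescoping rank-one terms (the paper writes four summands where you write three, but they are the same decomposition), bound each with $|(x,y)|\leq \|G\|\cdot\|x\|\cdot\|y\|$ and the ultrametric maximum, and use $\|h\|<\|x\|$ to collapse the maximum to the stated constant $c$. The subtlety you flag --- that $|(x+h)^2|=|x^2|$ is not automatic from $\|h\|<\|x\|$ --- is real; the paper's own proof carries $|(x+h)^2|^{-1}$ through its displayed bounds and silently replaces it by $|x^2|^{-1}$ in the final claim, so your explicit treatment of this point is if anything more careful than the original.
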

\begin{proof}
\[\begin{array}{ll}
    & s_{x}(y) - s_{x+h}(y) = \dfrac{2 (x+h,y)}{(x+h)^2}(x+h) - \dfrac{2 (x,y)}{x^2}x\\[15pt]
    =& 2\left(\dfrac{1}{(x+h)^2}-\dfrac{1}{x^2} \right)(x,y) x
     + 2\dfrac{(h,y)}{(x+h)^2}x    + 2\dfrac{(h,y)}{(x+h)^2}h
     + 2\dfrac{(x,y)}{(x+h)^2}h 
\end{array}\]
We may estimate the error norm by the maximum over each summand and use that $|(x,y)|\leq ||G|| \cdot ||x|| \cdot ||y||$. 
Then we arrive at 
\[||2G|| \cdot ||y|| \cdot  |(x+h)^2|^{-1} \cdot \max \left\{ |x^2|^{-1}|(x+h)^2 - x^2| , ||h|| \cdot ||x|| , ||h||^2\right\},\]
which simplifies to
\[||2G|| \cdot ||y|| \cdot  |(x+h)^2|^{-1} \cdot \max \left\{ |x^2|^{-1}|2x.h - h^2| , ||h|| \cdot ||x|| , ||h||^2\right\}\]
and is bounded above by
\[||2G|| \cdot ||y|| \cdot  |(x+h)^2|^{-1} \cdot ||h|| \max \left\{ |x^2|^{-1}||2x|| \cdot ||G||, |x^2|^{-1} ||G|| \cdot ||h|| , ||x|| , ||h||\right\}.\]
Using the assumption $||h|| < ||x||$ we arrive at the claim.
\end{proof}
\begin{lemma}
Fix a prime $p$. Let $x_1,\dots, x_n, h_1,\dots, h_n  \in \QQ^n$ with $||h_i|| < ||x_i||$ for all $i \in \{1,\dots n \}$.
Define $c_i$ and $d_i$ as in \cref{lem:error} with $x=x_i$ and $h_i$
\[|| s_{x_1+h_1} \circ \dots \circ s_{x_n+h_n} - s_{x_1}\circ \dots \circ s_{x_n}|| \leq  \max\left\{ \prod_{i \in I} c_i ||h_i|| \prod_{j \in I^c} d_j \mid \emptyset \neq I \subseteq \{1,\dots n\}\right\} \leq Ce \]
where $e = \max\{||h_i|| : i \in \{1,\dots n\}\}$ 
and $C = \max\{\max\{c_i, d_i\} : i \in \{1,\dots , n\}\}$.
\end{lemma}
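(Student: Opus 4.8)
The plan is to rewrite the left-hand side as a telescoping expansion indexed by the subsets of $\{1,\dots,n\}$ and then estimate each summand factor by factor using \Cref{lem:error}. First I would write $S_i \coloneqq s_{x_i}$ and $H_i \coloneqq s_{x_i+h_i}-s_{x_i}$, regarded as endomorphisms of $V$, so that $s_{x_i+h_i}=S_i+H_i$. Expanding the (non-commutative) product over the matrix ring gives
\[
s_{x_1+h_1}\circ\dots\circ s_{x_n+h_n}=(S_1+H_1)\circ\dots\circ(S_n+H_n)=\sum_{I\subseteq\{1,\dots,n\}}P_I,
\]
where $P_I\coloneqq A_1\circ\dots\circ A_n$ is the composition whose $i$-th factor is $H_i$ if $i\in I$ and $S_i$ otherwise. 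Since $P_\emptyset=s_{x_1}\circ\dots\circ s_{x_n}$, this term cancels in the difference, leaving $\sum_{\emptyset\neq I}P_I$.

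Next I would bring in the $p$-adic estimates. Because $\|\cdot\|=\|\cdot\|_p$ is non-archimedean, the ultrametric triangle inequality gives $\|\sum_{\emptyset\neq I}P_I\|\le\max_{\emptyset\neq I}\|P_I\|$, and the maximum norm on matrices over $\QQ_p$ is submultiplicative (with no stray factor of $n$, precisely because it is non-archimedean), so $\|P_I\|\le\prod_{i\in I}\|H_i\|\prod_{j\in I^c}\|S_j\|$. Now \Cref{lem:error}, applied with $x=x_i$ and $h=h_i$ — which is legitimate since $\|h_i\|<\|x_i\|$ by hypothesis — gives $\|H_i\|=\|s_{x_i}-s_{x_i+h_i}\|\le c_i\|h_i\|$ and $\|S_j\|=\|s_{x_j}\|\le d_j$. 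Substituting these bounds into the displayed product and taking the maximum over nonempty $I$ yields the first inequality. The second is then a routine coarsening: bound every $c_i$ and every $d_j$ by $C$ and every $\|h_i\|$ by $e$, using that $I\neq\emptyset$ forces at least one factor $e$ to appear.

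I do not expect a real obstacle here; this is a standard perturbation bound for a product of matrices that each differ from a fixed one by a small amount, and the only care needed is organizational: keeping the factors of each $P_I$ in the correct order so that submultiplicativity applies verbatim, and verifying that the hypothesis $\|h_i\|<\|x_i\|$ of \Cref{lem:error} is met for every $i$. If one prefers not to spell out the $2^n$-term expansion, the same estimate follows by an immediate induction on $n$: writing $\Pi\coloneqq S_1\circ\dots\circ S_{n-1}$ and $\Pi'\coloneqq(S_1+H_1)\circ\dots\circ(S_{n-1}+H_{n-1})$, one has $\Pi'\circ(S_n+H_n)-\Pi\circ S_n=(\Pi'-\Pi)\circ(S_n+H_n)+\Pi\circ H_n$, and one applies the ultrametric inequality, the inductive hypothesis for $\|\Pi'-\Pi\|$, \Cref{lem:error}, and $\|S_n+H_n\|=\|s_{x_n+h_n}\|\le d_n$.
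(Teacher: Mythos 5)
Your proof is correct and takes essentially the same route as the paper: write $s_{x_i+h_i}=s_{x_i}+r_i$, expand the composition as a sum over subsets $I\subseteq\{1,\dots,n\}$, and bound each term using the ($p$-adic, hence ultrametric) triangle inequality, submultiplicativity of the maximum norm, and the bounds $\|r_i\|\leq c_i\|h_i\|$, $\|s_{x_i}\|\leq d_i$ from \Cref{lem:error}. The paper's proof is just a terser statement of exactly this expansion, so no further comparison is needed.
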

\begin{proof}
We write $s_{x_i+h} = s_{x_i}+r_i$ with $r_i = s_{x_i+h}-s_{x_i}$ in the endomorphism ring and note that $||s_{x_i}|| \leq d_i$ and $||r_i|| \leq c_i ||h_i||$. Then expand the product as a sum and estimate each term using the triangle inequality.  
\end{proof}

Given a $p$-adic lattice $L$, we say that $x \in L$ is a \emph{norm generator}, if $x^2\ZZ = n(L):=\sum_{y \in L} y^2 \ZZ$, which is called the \emph{norm} of $L$. The scale of $L$ is the (fractional) ideal $s(L):=\{x.y : x ,y \in L\}$. Since 
$2s(L)\subseteq n(L)$, the reflection in a norm generator preserves the lattice. 

\begingroup
\captionof{algorithm}{Isometry test}\label{alg:isometry-test}
\endgroup
\hrule
\begin{algorithmic}[1]
\REQUIRE $L_1,L_2$ indefinite integral lattices of rank $n \geq 3$.
\ENSURE Return whether $L_1$ and $L_2$ are isometric.
\IF{$L_1$ and $L_2$ are not in the same genus}
\RETURN false 
\ENDIF
\IF{the genus of $L_1$ consists of a single spinor genus}
\RETURN true 
\ENDIF 
\STATE Compute an isometry $f \colon L_1\otimes \QQ \to L_2\otimes \QQ$
\STATE $L_1 \gets f(L_1)$
\STATE $r \gets [L_1 : L_1 \cap L_2]$
\STATE $P = \text{support}(2 \det(L_1))$
\STATE Let $V:= L_2 \otimes \QQ = \QQ^n$, denote by $B_1$ the basis matrix of $L_1 \subseteq V$.
\STATE Let $d$ be the denominator of $B_1$.
\STATE $k_p \gets  \nu_p(r) + 2n + 10$
\FOR{$p \in P$} \label{alg:isometry-test-line-redo}
\STATE Compute a matrix $F_p \in \QQ$ such that $F_p$ is $k_p+\nu_p(d)$-approximate (w.r.t. $V$) and $F_p B_1 \in \ZZ_{(p)}$. This can be done using a $p$-adic normal form as described in \cite{miranda.morrison.embeddings} and quadratic Hensel lifting \cite[Algorithm 3]{brandhorst.veniani.hensel-lifting}. 
\IF{$\nu_p(\det(F_pB_1)-1) < 1 + \nu_p(4)$}
\STATE Compose $F_p$ by a reflection in a norm generator of $L_1 \otimes \ZZ_p$.
\ENDIF 
\ENDFOR
\STATE Compute an isometry $F \in \mathrm{SO}(V)$ with $F\equiv F_p \mod p^{\nu_p(r)}$ for all $p \in P$ using the weak approximation \Cref{alg:weak approximation}. 
\STATE $r \gets [L_2 : F(L_1) \cap L_2]$
\RETURN Return whether $r$ is an improper spinor generator of $L_1$ (see e.g. \cite[Chapter 15 \S 9]{conway.sloane.lattices}).
\end{algorithmic}\hrulefill\\

\begin{remark}
The correctness of \Cref{alg:isometry-test} follows from the theory of genera and spinor genera, see e.g. \cite{conway.sloane.lattices,cassels.quadratic,meara.quadratic}. The computation of the genus and the improper spinor generators is fast and effective, provided the factorization of the determinant is known. It is available for instance in \cite{OSCAR}.
\end{remark}

\printbibliography

\end{document}